\documentclass[english,11pt]{article}
\usepackage{tikz}
\usetikzlibrary{decorations.pathreplacing}
\usepackage{enumerate}
\usepackage[shortlabels]{enumitem}
\usepackage{verbatim}
\usepackage[margin=1in]{geometry}
\usepackage{amssymb}
\usepackage{caption}
\usepackage{bbm}
\usepackage{amsthm}
\usepackage{enumitem}

\usepackage{scalerel}    
\usepackage{stmaryrd}   



\makeatletter
\DeclareRobustCommand\widecheck[1]{{\mathpalette\@widecheck{#1}}}
\def\@widecheck#1#2{%
    \setbox\z@\hbox{\m@th$#1#2$}%
    \setbox\tw@\hbox{\m@th$#1%
       \widehat{%
          \vrule\@width\z@\@height\ht\z@
          \vrule\@height\z@\@width\wd\z@}$}%
    \dp\tw@-\ht\z@
    \@tempdima\ht\z@ \advance\@tempdima2\ht\tw@ \divide\@tempdima\thr@@
    \setbox\tw@\hbox{%
       \raise\@tempdima\hbox{\scalebox{1}[-1]{\lower\@tempdima\box
\tw@}}}%
    {\ooalign{\box\tw@ \cr \box\z@}}}
\makeatother

\usepackage{rotating,centernot,cancel}    

\usepackage{amsmath}
\usepackage{tocloft}
\usepackage{float}

\usepackage{tcolorbox}   
\PassOptionsToPackage{hyphens}{url}\usepackage{hyperref}
\usepackage{babel}
\theoremstyle{plain}

\input amssym.def
\input amssym.tex

\def\beq{\begin{equation}}
\def\eeq{\end{equation}}
\def\beqn{\begin{eqnarray}}
\def\eeqn{\end{eqnarray}}

\usepackage{mathtools}
\DeclarePairedDelimiter\floor{\lfloor}{\rfloor}



\newtheorem{theorem}{Theorem}[section]
\newtheorem{corollary}[theorem]{Corollary} 
\newtheorem{lemma}[theorem]{Lemma} 
\newtheorem*{lemma*}{Lemma}
\newtheorem{proposition}[theorem]{Proposition} 

\theoremstyle{remark}

\theoremstyle{definition}

\numberwithin{figure}{section}

\def\R{\mathbb R}

\def\P{\mathbb P}

\def\to{\rightarrow}
\def\dlim[#1][#2]{\lim_{#1 \to #2, #1 \neq #2}}

\def\Mid{\textup{Mid}}


\newcommand{\be}{\begin{equation}}
\newcommand{\ee}{\end{equation}}

\newcommand\mydots{\hbox to 1em{.\hss.\hss.}}





\newcommand{\myfootnote}[1]{
    \renewcommand{\thefootnote}{}
    \footnotetext{\scriptsize#1}
    \renewcommand{\thefootnote}{\arabic{footnote}}
}




\allowdisplaybreaks

\title{Large deviations of geodesic midpoint fluctuations in last-passage percolation with general i.i.d.~weights}

\author{Tom Alberts\thanks{\scriptsize{Department of Mathematics, University of Utah. \texttt{alberts@math.utah.edu}}}\qquad
Riddhipratim Basu\thanks{\scriptsize{International Centre for Theoretical Sciences, 
Tata Institute of Fundamental Research. \texttt{rbasu@icts.res.in}}}\qquad
Sean Groathouse\thanks{\scriptsize{Department of Mathematics, University of Utah. \texttt{sean@math.utah.edu}}}\qquad 
Xiao Shen\thanks{\scriptsize{Department of Mathematics, University of Utah. \texttt{xiao.shen@utah.edu}}}}
\date{}
\setcounter{tocdepth}{2}
\begin{document}
\maketitle

\begin{abstract}
The study of transversal fluctuations of the optimal path is a crucial aspect of the Kardar-Parisi-Zhang (KPZ) universality class. In this work, we establish the large deviation limit for the midpoint transversal fluctuations in a general last-passage percolation (LPP) model with mild assumption on the i.i.d.~weights. The rate function is expressed in terms of the right tail large deviation rate function of the last-passage value and the shape function.
When the weights are chosen to be i.i.d.~exponential random variables, our result verifies a conjecture communicated to us by Liu \cite{Liu-22a}, showing the asymptotic probability of the geodesic from $(0,0)$ to $(n,n)$ following the corner path $(0,0) \to (n,0) \to (n,n)$ is $({4}/{e^2})^{n+o(n)}$.

\end{abstract}

\myfootnote{Date: \today}
\myfootnote{2010 Mathematics Subject Classification. 60K35, 	60K37}
\myfootnote{Key words: large deviation, geodesics, last-passage percolation, Kardar-Parisi-Zhang.}

\section{Introduction}



Last-passage percolation (LPP) is one of many stochastic models characterized by random growth along optimal paths in a random environment. A central focus in LPP, and more broadly in the Kardar-Parisi-Zhang (KPZ) universality class \cite{Kar-Par-Zha-86}, is the study of \textit{transversal fluctuations}. These fluctuations quantify the deviation of the optimal path from reference lines and play a critical role in uncovering the geometric properties and space-time growth profiles of the model. 

The main focus of this paper is a large deviation principle for unusually large transversal fluctuations of the optimal path, on the scale of the distance between the two endpoints of the geodesic. The KPZ universality class is broadly divided into two categories: \textit{exactly solvable models}, where rigorous mathematical results can be derived using intricate algebraic methods, and \textit{general} or \textit{non-solvable models}, for which such fine tools are not directly applicable. Our approach is probabilistic and applies to general models beyond the exactly solvable cases. The main results are presented in the next section.

\subsection{Main results}
Last-passage percolation is a well-studied model in the KPZ universality class, yet has seen limited work on large deviation properties of the associated random geometry. In the LPP model, independent and identically distributed (i.i.d.)\ weights $\{\omega_{\mathbf{z}}\}_{\mathbf{z} \in \mathbb{Z}^2}$ are associated with the integer lattice $\mathbb{Z}^2$. Given two coordinatewise-ordered points ${\mathbf{u}}, {\mathbf{v}}$ of $ \mathbb{Z}^2$, an up-right path between them is a nearest neighbor path $\gamma = \{\gamma_i\}_{i=0}^{|\mathbf{u} - \mathbf{v}|_1}$ such that 
$$
\gamma_0 = \mathbf{u}, \quad \gamma_{|\mathbf{u} - \mathbf{v}|_1} = \mathbf{v}, \textup{ and }\gamma_{i+1} - \gamma_i \in \{\mathbf{e}_1, \mathbf{e}_2\},
$$ 
where $\mathbf{e}_1$, $\mathbf{e}_2$ are the standard basis vectors of $\R^2$. The weight of each up-right path is the sum of the $\omega$ variables along the path, and the \textit{last-passage value} $G_{\mathbf{u}, \mathbf{v}}$ is then defined to be the maximum weight among all up-right paths from $\mathbf{u}$ to $\mathbf{v}$ while excluding the starting point $\mathbf{u}$, i.e. 
\begin{equation}\label{G}
G_{\mathbf{u}, \mathbf{v}} =  \max_{\substack{\textup{up-right}\\{ \gamma: \mathbf u \to \mathbf v}}} \;\;\sum_{\mathbf{z} \in  \gamma\setminus\{\mathbf{u}\}} \omega_{\mathbf{z}}.
\end{equation}
We set $G_{\mathbf{u}, \mathbf{v}} = -\infty$ if there is no up-right path between $\mathbf{u}$ and $\mathbf{v}$. 
The maximizing paths for $G_{\mathbf{u}, \mathbf{v}}$ are commonly referred to as the \textit{geodesics}. 

In this paper, we will assume that our i.i.d.~weights $\{\omega_\mathbf{z}\}$ are non-negative and satisfy 
\begin{align}\label{wa}
\begin{cases}
\exists \,\alpha > 0 \textup{ such that } \mathbb{E}\big[e^{\alpha\omega_{\mathbf{z}}}\big] < \infty,\\
\omega_{\mathbf{z}} \textup{ is continuous, i.e. its CDF is continuous}\\
\forall\, \beta < \infty, \mathbb{P}(\omega_{\mathbf{z}} > \beta) > 0.
\end{cases}
\end{align}
Among the three assumptions stated, the exponential moment condition is essential to our analysis. The other two assumptions, while not strictly necessary, serve to simplify notation and streamline our arguments. Specifically, the continuity assumption guarantees the uniqueness of geodesics; however, this could be adjusted by considering the rightmost geodesics in cases where uniqueness does not hold. Additionally, the unbounded support of the weights ensures that the large deviation rate function for the right tail of the last-passage value remains finite.

Throughout the paper, we will assume that $n$ is an even integer. This is only for the simplification of notation, as all of our results and proofs also hold when $n$ is odd.  Let $\Mid_{0,n}$ denote the midpoint of the geodesic from $(0,0)$ to $(n,n)$, which we regard as a random point on the line $$\mathcal{L}_{n/2} = \{ (x,y) \in \R^2 : x + y = n \}.$$  Our main result below gives the large deviation estimate for the point $\Mid_{0,n}$. To present this result, we first introduce the limiting shape function of the last-passage values and their right tail large deviations rate function.

We mostly use the passage times $G_{\mathbf{0}, \mathbf{v}}$ where $\mathbf{v}$ is on the line $x+y = n$, which we typically write as $\mathbf{v} = (\floor{n/2 + tn}, \floor{n/2 - tn})$ with $-\frac12 \leq t \leq \frac12$. The right tail large deviation rate function of the last-passage value is defined as
$$
{\mathcal{J}}_t(r) = -\lim_{n\to \infty} \frac{1}{n}\log \mathbb{P}(G_{\mathbf{0},(\floor{n/2+tn}, \floor{n/2-tn})} \geq rn).
$$
The limit exists due to Fekete's Subadditive Lemma. Section \ref{lpp_rate} has more information and background on this rate function. We will also make heavy use of the shape function 
\[
\mu_t = \lim_{n\to\infty} \frac{G_{\mathbf{0}, (\floor{n/2+tn}, \floor{n/2-tn})}}{n}.
\]
The existence of the limit is guaranteed by Kingman's Subadditive Ergodic Theorem \cite{Dur-96}.

\begin{theorem}\label{main_thm}
Fix $0< t \leq 1/2$ such that $\mu_0 > \mu_t$. For each $\epsilon>0$, there exists a constant $n_0$ such that for each $n \geq n_0$, it holds that 
$$
e^{-2n(\mathcal{J}_t(\mu_0) + \epsilon)}  \leq \mathbb{P}\Big(\Mid_{0,n} \cdot \mathbf{e}_1 =  \floor{n/2+tn}\Big) \leq  \mathbb{P}\Big(\Mid_{0,n} \cdot \mathbf{e}_1 \geq  \floor{n/2+tn}\Big) \leq e^{-2n(\mathcal{J}_t(\mu_0) - \epsilon)}.
$$
Thus, in the limit, we have
$$
2{\mathcal{J}}_t(\mu_0) = -\lim_{n\to \infty} \frac{1}{n} \log \mathbb{P}\Big(\Mid_{0,n} \cdot \mathbf{e}_1 =  \floor{n/2+tn}\Big) = -\lim_{n\to \infty} \frac{1}{n} \log \mathbb{P}\Big(\Mid_{0,n} \cdot \mathbf{e}_1 \geq  \floor{n/2+tn}\Big).
$$
\end{theorem}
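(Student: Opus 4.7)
The plan is to exploit the decomposition $G_{\mathbf{0},(n,n)} = \max_{\mathbf{w} \in \mathcal{L}_{n/2} \cap \Z^2} H_\mathbf{w}$, where $H_\mathbf{w} := G_{\mathbf{0},\mathbf{w}} + G_{\mathbf{w},(n,n)}$, so that continuity of the weight distribution makes the maximizer (namely $\Mid_{0,n}$) almost surely unique. Two features drive both bounds: (a) for each fixed antidiagonal $\mathbf{w}$, the summands $G_{\mathbf{0},\mathbf{w}}$ and $G_{\mathbf{w},(n,n)}$ depend on disjoint rectangles of weights and are therefore independent; (b) the lattice reflection $(x,y) \mapsto (n-y,n-x)$ preserves the i.i.d.\ law and maps $G_{\mathbf{w},(n,n)}$ to $G_{\mathbf{0},(n,n)-\mathbf{w}}$, so the right-tail LDP applies symmetrically to both factors with rate governed by $\mathcal{J}_{t(\mathbf{w})}$. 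Throughout I write $\mathbf{v}_t := (\lfloor n/2+tn\rfloor, \lfloor n/2-tn\rfloor)$.

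For the upper bound, I would first intersect with the high-probability shape event $\{G_{\mathbf{0},(n,n)} \geq (2\mu_0-\delta)n\}$ for small $\delta > 0$; the event $\{\Mid_{0,n} \cdot \mathbf{e}_1 \geq \lfloor n/2+tn\rfloor\}$ then forces $H_\mathbf{w} \geq (2\mu_0-\delta)n$ for some $\mathbf{w}$ with $\mathbf{w} \cdot \mathbf{e}_1 \geq \lfloor n/2+tn\rfloor$. A union bound over the $O(n)$ such $\mathbf{w}$ reduces the problem to bounding $\mathbb{P}(H_\mathbf{w} \geq (2\mu_0-\delta)n)$ pointwise; a Chernoff-type split combined with independence and convexity of $\mathcal{J}_{t(\mathbf{w})}$ makes the optimal split symmetric, yielding the bound $\exp(-2n(\mathcal{J}_{t(\mathbf{w})}(\mu_0-\delta/2)-o(1)))$. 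The monotonicity $\mathcal{J}_{t'}(\mu_0) \geq \mathcal{J}_t(\mu_0)$ for $|t'|\geq t$ (from monotonicity of $\mu_{t'}$ in $|t'|$), absorption of the polynomial prefactor into $\epsilon$, and continuity of $\mathcal{J}_t$ at $\mu_0$ as $\delta\to 0$ finish the upper bound.

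For the lower bound, I would consider the favorable event $A := \{G_{\mathbf{0},\mathbf{v}_t} \geq (\mu_0+\delta)n\} \cap \{G_{\mathbf{v}_t,(n,n)} \geq (\mu_0+\delta)n\}$. By independence of the two factors and the right-tail LDP, $\mathbb{P}(A) \geq \exp(-2n(\mathcal{J}_t(\mu_0+\delta)+o(1)))$, which for $\delta$ small is at least $e^{-2n(\mathcal{J}_t(\mu_0)+\epsilon/2)}$ by continuity of $\mathcal{J}_t$. On $A$, $H_{\mathbf{v}_t} \geq 2(\mu_0+\delta)n$, so the midpoint has $H$-value at least this large.

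The hard part will be converting this into a matching lower bound on $\mathbb{P}(\Mid_{0,n} = \mathbf{v}_t)$ itself, because on $A$ the conditional distribution tilts the weights in the two rectangles bordering $\mathbf{v}_t$, which can boost $H_\mathbf{w}$ for competitor antidiagonal sites $\mathbf{w}\neq\mathbf{v}_t$---most dangerously for $\mathbf{w}$ near $(n/2,n/2)$, whose unconditional typical $H$-value is already $\approx 2\mu_0 n$. The plan to handle this is a concentration-plus-pigeonhole argument: first establish that on a large sub-event of $A$ the midpoint lies in a polynomially-sized window around $\mathbf{v}_t$ (via a transverse-fluctuation estimate for the conditional geodesic), then use the already-proved upper bound together with a monotonicity-in-endpoint argument across the window to convert the lower bound on the windowed probability into a pointwise lower bound at $\mathbf{v}_t$, with polynomial losses absorbed into $\epsilon$.
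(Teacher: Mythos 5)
Your upper bound is essentially the paper's argument. Both reduce to a union bound over $O(n)$ antidiagonal sites, intersect with the left-tail shape event $\{G_{\mathbf{0},(n,n)}\geq(2\mu_0-\delta)n\}$, and then bound the survivor event $\{H_{\mathbf{w}}\geq(2\mu_0-\delta)n\}$; you propose a Chernoff-type split plus convexity of $\mathcal{J}$, while the paper uses the superadditive bound $G_{\mathbf{0},\mathbf{w}}+G_{\mathbf{w},2\mathbf{w}}\leq G_{\mathbf{0},2\mathbf{w}}$ followed by a single application of the right-tail LDP at scale $2n$. These are interchangeable, so your upper bound is fine.

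The lower bound has a genuine gap. You correctly flag the difficulty, but the repair you propose---a ``transverse-fluctuation estimate for the conditional geodesic'' giving a polynomial window around $\mathbf{v}_t$---is not an available tool; it is precisely the statement that has to be proved, and your two-rectangle conditioning event $A$ gives no leverage to prove it. Conditioning on $A=\{G_{\mathbf{0},\mathbf{v}_t}\geq(\mu_0+\delta)n\}\cap\{G_{\mathbf{v}_t,(n,n)}\geq(\mu_0+\delta)n\}$ tilts the law of the weights in two fat rectangles, but says nothing that prevents the geodesic $\Gamma_{0,n}$ from cutting across the bulk and intersecting $\mathcal{L}_{n/2}$ near $(n/2,n/2)$: the competitor passage value $H_{\mathbf{w}}$ for $\mathbf{w}$ between the diagonal and $\mathbf{v}_t$ uses a mixture of tilted and untilted weights, and there is no estimate controlling how much the conditional law inflates it. The paper's mechanism is qualitatively different. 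It plants a \emph{chain} $\mathbf{h}_0,\dots,\mathbf{h}_{2J_0}$ of $2\delta^{-5}$ points, spaced $2\delta^5 n$ apart, along the two straight segments through the deliberately \emph{overshifted} point $\mathbf{h}_{J_0}=(n/2+(t+100\delta)n,\,n/2-(t+100\delta)n)$, and conditions on \emph{all} short-range values $G_{\mathbf{h}_{i-1},\mathbf{h}_i}\geq(\mu_0+\delta)2\delta^5 n$. This chain structure is what makes the argument close: if $\Gamma_{0,n}$ deviated from the concatenated path $\Theta$ for a long stretch, one could exhibit an alternative path $\gamma^*$ whose middle piece achieves a large right-tail deviation \emph{disjointly} from $\mathcal{A}$, which the BKR inequality shows is exponentially unlikely; the short-range local events are what make the disjoint-occurrence comparison effective. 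The $100\delta$ overshoot is then used to absorb the $O(\delta n)$ coalescence slack so the midpoint really lands at or to the right of $\mathbf{v}_t$. None of this is recoverable from your simpler conditioning.

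Two further points. First, the closing step from the lower bound on $\mathbb{P}(\Mid_{0,n}\cdot\mathbf{e}_1\geq\lfloor n/2+tn\rfloor)$ down to the pointwise $\mathbb{P}(\Mid_{0,n}\cdot\mathbf{e}_1=\lfloor n/2+tn\rfloor)$: the paper does this by averaging over $n^2$ diagonal translates of the whole problem and using path monotonicity to force some translate to hit the exact value. Your proposed ``upper bound $+$ monotonicity across the window'' bridge does not obviously work, because the window may contain sites to the left of $\mathbf{v}_t$ whose pointwise probabilities are \emph{larger}, so a pigeonhole on the window does not localize at $\mathbf{v}_t$. Second, the case $t=1/2$ needs a separate argument (the overshoot has nowhere to go and one of your rectangles degenerates to a line); the paper builds a corner-specific event with depressed weights in a small near-corner box and elevated weights along the two boundary rays near $(n,0)$. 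Your plan does not address this case.
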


The assumption $\mu_0 > \mu_t$ is equivalent to requiring $(n/2 + tn, n/2- tn)$ to be outside of the diagonal cone for which the limit shape is flat. As long as the weights are not almost surely constant, the flat stretch cannot take up the entire interval $t \in [0, 1/2]$, i.e.\ there is an $\eta \in (0, 1/2)$ such that $\mu_0> \mu_t$ for all $t \in [\eta, 1/2]$, which we now explain. First, note we have a strictly inequality $\mu_0 > \mu_{1/2}$ because 
$$
\mu_0   = \lim_{n\to \infty} \frac{1}{n} G_{\mathbf{0}, (n/2,n/2)} \geq \lim_{n\to \infty} \frac{1}{n}\sum_{i=1}^{n/2} \Big[\omega_{(i, i)} + \max\{\omega_{(i+1, i)}, \omega_{(i, i+1)} \} \Big] >\lim_{n\to \infty} \frac{1}{n}\sum_{i=1}^{n} \omega_{(i, 0)} =  \mu_{1/2}.
$$ 
Then, it is known that $\mu_t$ is continuous in $t$ up to the boundary \cite[Theorem 2.3]{Mar-04}, so our claim follows.
In addition, this assumption is natural because, in the only known scenario where this flat segment appears in the limit shape---namely, due to the supercritical oriented percolation at \(\operatorname{esssup} \omega_{\mathbf{z}}\), which is conjectured in \cite[Question 2.5.4]{Auf-Dam-Han-17} to be the only cause of this phenomenon---we have \( \mathcal{J} = \infty \) for directions strictly inside this cone. This is illustrated in Figure \ref{fig2}.

\begin{figure}[t]
\begin{center}

\tikzset{every picture/.style={line width=0.75pt}} 

\begin{tikzpicture}[x=0.75pt,y=0.75pt,yscale=-1.2,xscale=1.2]

\draw [color={rgb, 255:red, 155; green, 155; blue, 155 }  ,draw opacity=1 ] [dash pattern={on 0.84pt off 2.51pt}]  (120.52,189.82) -- (241.31,148.2) ;
\draw [shift={(243.2,147.55)}, rotate = 160.99] [color={rgb, 255:red, 155; green, 155; blue, 155 }  ,draw opacity=1 ][line width=0.75]    (10.93,-3.29) .. controls (6.95,-1.4) and (3.31,-0.3) .. (0,0) .. controls (3.31,0.3) and (6.95,1.4) .. (10.93,3.29)   ;
\draw [color={rgb, 255:red, 155; green, 155; blue, 155 }  ,draw opacity=1 ] [dash pattern={on 0.84pt off 2.51pt}]  (120.52,189.82) -- (161.05,70.94) ;
\draw [shift={(161.7,69.05)}, rotate = 108.83] [color={rgb, 255:red, 155; green, 155; blue, 155 }  ,draw opacity=1 ][line width=0.75]    (10.93,-3.29) .. controls (6.95,-1.4) and (3.31,-0.3) .. (0,0) .. controls (3.31,0.3) and (6.95,1.4) .. (10.93,3.29)   ;
\draw [color={rgb, 255:red, 155; green, 155; blue, 155 }  ,draw opacity=1 ] [dash pattern={on 0.84pt off 2.51pt}]  (240.02,70.32) -- (106.1,114.92) ;
\draw [shift={(104.2,115.55)}, rotate = 341.58] [color={rgb, 255:red, 155; green, 155; blue, 155 }  ,draw opacity=1 ][line width=0.75]    (10.93,-3.29) .. controls (6.95,-1.4) and (3.31,-0.3) .. (0,0) .. controls (3.31,0.3) and (6.95,1.4) .. (10.93,3.29)   ;
\draw [color={rgb, 255:red, 155; green, 155; blue, 155 }  ,draw opacity=1 ] [dash pattern={on 0.84pt off 2.51pt}]  (240.02,70.32) -- (196.35,197.66) ;
\draw [shift={(195.7,199.55)}, rotate = 288.93] [color={rgb, 255:red, 155; green, 155; blue, 155 }  ,draw opacity=1 ][line width=0.75]    (10.93,-3.29) .. controls (6.95,-1.4) and (3.31,-0.3) .. (0,0) .. controls (3.31,0.3) and (6.95,1.4) .. (10.93,3.29)   ;
\draw [color={rgb, 255:red, 245; green, 166; blue, 35 }  ,draw opacity=1 ]   (120.52,189.82) -- (275.53,87.15) ;
\draw [shift={(277.2,86.05)}, rotate = 146.48] [color={rgb, 255:red, 245; green, 166; blue, 35 }  ,draw opacity=1 ][line width=0.75]    (10.93,-3.29) .. controls (6.95,-1.4) and (3.31,-0.3) .. (0,0) .. controls (3.31,0.3) and (6.95,1.4) .. (10.93,3.29)   ;
\draw  [fill={rgb, 255:red, 0; green, 0; blue, 0 }  ,fill opacity=1 ] (235.83,70.32) .. controls (235.83,68.01) and (237.71,66.13) .. (240.02,66.13) .. controls (242.33,66.13) and (244.2,68.01) .. (244.2,70.32) .. controls (244.2,72.63) and (242.33,74.5) .. (240.02,74.5) .. controls (237.71,74.5) and (235.83,72.63) .. (235.83,70.32) -- cycle ;
\draw  [fill={rgb, 255:red, 0; green, 0; blue, 0 }  ,fill opacity=1 ] (116.33,189.82) .. controls (116.33,187.51) and (118.21,185.63) .. (120.52,185.63) .. controls (122.83,185.63) and (124.7,187.51) .. (124.7,189.82) .. controls (124.7,192.13) and (122.83,194) .. (120.52,194) .. controls (118.21,194) and (116.33,192.13) .. (116.33,189.82) -- cycle ;
\draw [color={rgb, 255:red, 155; green, 155; blue, 155 }  ,draw opacity=1 ]   (130.7,79.9) -- (240.2,190.55) ;
\draw  [line width=1.5] [line join = round][line cap = round] (123.2,188.05) .. controls (127.12,187.27) and (132.44,183.47) .. (135.2,181.05) .. controls (137.69,178.87) and (141.5,170.54) .. (144.2,170.05) .. controls (152.27,168.58) and (157.95,167.61) .. (164.2,166.05) .. controls (167.75,165.16) and (170.15,158.43) .. (173.7,158.05) .. controls (178.18,157.57) and (182.71,157.88) .. (187.2,157.55) .. controls (189.57,157.37) and (192.39,152.04) .. (194.2,149.55) .. controls (195.32,148.01) and (198.65,149.63) .. (199.7,148.05) .. controls (200.81,146.38) and (199.71,144) .. (200.2,142.05) .. controls (200.55,140.64) and (211.18,125.73) .. (211.2,125.55) .. controls (211.84,120.92) and (211.41,116.16) .. (212.2,111.55) .. controls (212.52,109.72) and (218.11,108.95) .. (219.7,107.05) .. controls (222.94,103.16) and (224.74,88.24) .. (228.2,83.05) .. controls (228.99,81.87) and (230.98,82.28) .. (232.2,81.55) .. controls (236.84,78.76) and (238.86,74.25) .. (240.2,69.55) ;

\draw (101.7,197.8) node [anchor=north west][inner sep=0.75pt]    {$(0,0)$};
\draw (224.7,45.8) node [anchor=north west][inner sep=0.75pt]    {$(n,n)$};
\draw (238,186.4) node [anchor=north west][inner sep=0.75pt]    {$\mathcal{L}_{n/2}$};

\end{tikzpicture}

\captionsetup{width=.8\linewidth}
\caption{Looking within the supercritical oriented percolation cone at \((0,0)\) in the northeast direction and the cone at \((n,n)\) in the southwest direction, if we fix a direction (shown in orange) that lies strictly inside the cone at $(0,0)$, as $n$ becomes large, with high probability, there will exist a geodesic formed by concatenating open paths from within the two cones, whose midpoint is to the right of the chosen direction.} \label{fig2}
\end{center}
\end{figure}
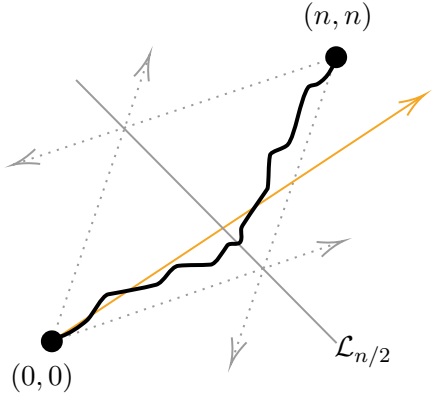

Next, we record a similar result for the endpoint of the point-to-line geodesic. The point-to-line last-passage value is defined to be 
$$
{G}_{\mathbf{0}, \mathcal{L}_{n/2}} = \max_{k\in \mathbb{Z}} {G}_{\mathbf{0}, (\floor{n/2+k}, \floor{n/2-k})}
$$
Let $\textup{End}_{0,n}$ denote the endpoint of the point-to-line geodesic.
\begin{proposition}
Fix $0< t \leq 1/2$ such that $\mu_0 > \mu_t$. For each $\epsilon>0$, there exists a $n_0$ such that for each $n \geq n_0$, it holds that 
$$
e^{-n(\mathcal{J}_t(\mu_0) + \epsilon)} \leq  \mathbb{P}\Big(\textup{End}_{0,n} \cdot \mathbf{e}_1 =  \floor{n/2 +tn}\Big) \leq  \mathbb{P}\Big(\textup{End}_{0,n} \cdot \mathbf{e}_1\geq  \floor{n/2 +tn}\Big) \leq e^{-n(\mathcal{J}_t(\mu_0) - \epsilon)}.
$$
Thus, in the limit, it holds that 
$$
{\mathcal{J}}_t(\mu_0) = -\lim_{n\to \infty} \frac{1}{n} \log \mathbb{P} \Big(\textup{End}_{0,n} \cdot \mathbf{e}_1 = \floor{n/2 + tn}\Big) = -\lim_{n\to \infty} \frac{1}{n} \log \mathbb{P}\Big(\textup{End}_{0,n} \cdot \mathbf{e}_1\geq \floor{n/2 + tn} \Big). 
$$
\end{proposition}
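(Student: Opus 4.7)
The proposition is the point-to-line analogue of Theorem~\ref{main_thm}; the halving of the rate from $2\mathcal{J}_t(\mu_0)$ to $\mathcal{J}_t(\mu_0)$ reflects that only one of the two independent halves behind the midpoint is visible to the point-to-line endpoint. I will handle the upper and lower bounds separately, using the same ingredients as in the proof of the main theorem.

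\textbf{Upper bound.} Writing $k_t = \floor{n/2+tn}$, the event $\{\textup{End}_{0,n}\cdot\mathbf{e}_1 \geq k_t\}$ forces some $k\geq tn$ to satisfy $G_{\mathbf{0},(\floor{n/2+k},\floor{n/2-k})} \geq G_{\mathbf{0},(n/2,n/2)}$. The exponential-moment assumption~\eqref{wa} yields a left-tail bound $\mathbb{P}(G_{\mathbf{0},(n/2,n/2)} < (\mu_0-\epsilon/2)n) \leq e^{-cn}$, allowing me to restrict to the complementary event. A union bound over $k\in[tn, n/2]$, combined with the definition of $\mathcal{J}_{k/n}$, continuity of $r\mapsto\mathcal{J}_t(r)$, and the monotonicity $\mathcal{J}_s(r) \geq \mathcal{J}_t(r)$ for $s \in [t, 1/2]$ (inherited from the monotonicity of $s\mapsto \mu_s$ in the curved regime), then yields the desired bound $e^{-n(\mathcal{J}_t(\mu_0) - \epsilon)}$.

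\textbf{Lower bound.} I will pick $\delta>0$ small enough, via continuity of $r\mapsto\mathcal{J}_t(r)$, that $\mathcal{J}_t(\mu_0+\delta)\leq \mathcal{J}_t(\mu_0)+\epsilon/2$, and consider the event $A_\delta := \{G_{\mathbf{0},(k_t,n-k_t)}\geq (\mu_0+\delta)n\}$. The rate-function definition gives $\mathbb{P}(A_\delta) \geq e^{-n(\mathcal{J}_t(\mu_0)+3\epsilon/4)}$ for $n$ large. The remaining step is a conditional estimate: on $A_\delta$, I will argue that with conditional probability bounded below by a positive constant, $G_{\mathbf{0},(\floor{n/2+k'},\floor{n/2-k'})} < G_{\mathbf{0},(k_t,n-k_t)}$ for every $k'\neq k_t$, so that the point-to-line geodesic lands exactly at $(k_t, n-k_t)$.

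\textbf{Main obstacle.} The conditional step is the technical core. Conditioning $G_{\mathbf{0},(k_t,n-k_t)}$ to be atypically large also inflates shared-weight passage times $G_{\mathbf{0},(\floor{n/2+k'},\floor{n/2-k'})}$ for $k'<tn$ near the diagonal, where the typical value $\mu_{k'/n} n$ is already close to $(\mu_0+\delta)n$. I will handle this by a geodesic-localization argument: the excess weight producing $A_\delta$ is concentrated in a thin tube around a specific path to $(k_t, n-k_t)$, so that in the complementary region the weights remain typical and keep $G_{\mathbf{0},(\floor{n/2+k'},\floor{n/2-k'})}$ close to $\mu_{k'/n} n < (\mu_0+\delta) n$ for $k'\neq k_t$. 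I expect this localization step to reuse the concentration and path-regularity tools developed for Theorem~\ref{main_thm}.
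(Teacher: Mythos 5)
Your high-level reading is correct: the rate halves because only the path \emph{into} $\mathcal{L}_{n/2}$ is constrained, and the proof ought to mirror Theorem~\ref{main_thm}. However, both halves of your plan leave real gaps.

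\textbf{Upper bound.} The left-tail estimate $\mathbb{P}(G_{\mathbf{0},(n/2,n/2)} < (\mu_0-\epsilon/2)n)\le e^{-cn}$ with an unspecified $c=c(\epsilon)$ does not close the argument. You need this term to be $\lesssim e^{-n(\mathcal{J}_t(\mu_0)-\epsilon)}$ for \emph{all} admissible $t\in(0,1/2]$, and $\mathcal{J}_t(\mu_0)$ can be arbitrarily large as $t\to 1/2$ (indeed $\mathcal{J}_{1/2}(\mu_0)>0$ is an explicit i.i.d.\ sum rate function). An exponential-Markov bound gives no control on how large the constant $c$ is; you must instead invoke the super-exponential lower-tail decay $\mathbb{P}(G_{\mathbf{0},(n/2,n/2)} \le \mu_0 n -\epsilon n)\le e^{-cn^2}$ of Proposition~\ref{ld_left}, which dominates any exponential rate. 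Once that is substituted, the rest of the argument (union bound over $k\ge tn$, Proposition~\ref{mono_t}, continuity of $r\mapsto\mathcal{J}_t(r)$) goes through.

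\textbf{Lower bound.} Conditioning directly on the single global event $A_\delta=\{G_{\mathbf{0},(k_t,n-k_t)}\ge(\mu_0+\delta)n\}$ and hoping for a ``geodesic-localization'' that concentrates the excess mass is the step that cannot be taken for granted --- it is the actual content of the proposition, not a tool that can be cited. Conditioning on a global right-tail event tilts the field on a macroscopic region, and the nearby passage times $G_{\mathbf{0},(n/2+k',n/2-k')}$ share essentially all of their weight with $A_\delta$; there is no ready-made estimate showing the conditional law still looks typical off a thin tube. The way the paper resolves this (in the analogous proof of Theorem~\ref{main_thm}) is to \emph{choose} where the excess lives: plant a sequence of \emph{independent, local} high-weight events along short segments $\{\mathbf{h}_{i-1}\to\mathbf{h}_i\}$, and control competing paths via the BKR inequality together with a ratio of right-tail probabilities (equations~\eqref{left}--\eqref{ratio}). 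That makes the localization a consequence of the construction rather than an assertion about the conditional measure. Separately, aiming straight at the exact event $\{\textup{End}_{0,n}\cdot\mathbf{e}_1=k_t\}$ via a constant conditional probability is also problematic: under $A_\delta$ the geodesic will frequently overshoot by $O(1)$ lattice sites, so you should first bound $\mathbb{P}(\textup{End}_{0,n}\cdot\mathbf{e}_1\ge k_t)$ from below and then pass to the pointwise event by the translation--invariance averaging argument (the $\mathcal{E}_k$/$\mathcal{H}_k$ trick in Section~4), absorbing the polynomial loss into the $\epsilon$.
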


\subsection{Relations to the existing literature}

Precise large deviation principles (LDP) for the transversal fluctuations have not been derived in the past. For general weight distributions, the only result has been a concentration inequality in a closely related model known as \textit{first-passage percolation}, see \cite[Lemma 9.10]{Kes-86-stflour}. To state this result in the LPP setting, let $D_{0, n}$ denote the displacement of the geodesic $\Gamma_{0, n}$ from the diagonal line $y=x$. This result states that when the shape function has an exposed point in the diagonal direction (i.e. the limit shape has no flat segment at the diagonal), then
$$
\lim_{\epsilon\to 0} \liminf_{\\n\to\infty} \mathbb{P}(D_{0,n} < \epsilon n) =  1.
$$
Even for exactly solvable models, there has been little prior work on the large deviations principle of the transversal fluctuations for exactly solvable lattice models. A recent paper of Liu \cite{Liu-22a} provides an exact formula for the probability of a geodesic passing through a specified edge in the exactly solvable exponential LPP. Liu's formula is expressed in terms of iterated contour integrals and, as noted by the author, it is challenging to extract asymptotics from it.

For the directed landscape, another exactly solvable model which is known to be the scaling limit of certain exactly solvable models and is conjectured to be the universal scaling limit for all models within the KPZ universality class \cite{KPZ_DL}, the LDP for the last-passage value and the transversal fluctuations of geodesics have been established in \cite{dl_ldp}. However, since the directed landscape arises as a scaling limit of LPP under KPZ scaling, the transversal large deviations in the directed landscape correspond to \textit{moderate} deviations in pre-limit models. For the moderate deviations of the transversal fluctuation in FPP, under the curvature assumptions of the limit shape, the works \cite{MR3189069, MR3010809, transfluc} established that $D_{0,n} \leq n^{3/4 + o(1)}$ with high probability. In exponential LPP a series of papers demonstrated the exponential decay of the probability of the rare event that $D_{0,n}$ exceeds the typical scale $n^{2/3}$ \cite{aga-23, timecorrflat, slowbondproblem, ham-sar-20, lb_tf}. More recently, the precise leading-order constant in the exponential decay of the probabilities for large transversal fluctuations was determined in \cite{midpoint}. Our approach is partly inspired by the geometric arguments presented in that work.

Lastly, regarding the rate function \(\mathcal{J}_t\), a variational formulation over paths was recently derived in \cite{var_J_fpp} in the context of FPP, resembling the large deviation rate function established for the directed landscape in \cite{dl_ldp}.

\subsection{The $t=1/2$ Case}

The $t=1/2$ case corresponds to $\Mid_{0,n} = (n,0)$, or in other words the geodesic traverses the extreme corner path $(0,0) \to (n,0) \to (n,n)$. In private communication Liu used heuristics from his exact formula in \cite{Liu-22a} to conjecture an asymptotic formula for $\log \P (\Mid_{0,n} = (n,0))$ in the exponential LPP setting. We verify his conjecture using Theorem \ref{main_thm}. 

\begin{corollary}\label{corner}
Assume the $\omega_{\mathbf{z}}$ are i.i.d.~exponential random variables with rate one. Let $\Gamma_{0,n}$ denote the geodesic between $(0,0)$ and $(n,n)$. Then
\[
-\lim_{n\to \infty} \frac{1}{n} \log \mathbb{P} \bigg( (n,0) \in \Gamma_{0,n} \bigg) = 2 - 2 \log 2.
\]
\end{corollary}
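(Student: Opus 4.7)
The plan is to apply Theorem \ref{main_thm} with $t=1/2$ directly. First I would observe that the geodesic $\Gamma_{0,n}$ passes through $(n,0)$ if and only if $\Mid_{0,n}=(n,0)$: the midpoint of an up-right path of length $2n$ lies on $\mathcal{L}_{n/2}$, and $(n,0)\in\mathcal{L}_{n/2}$, so any up-right path from $(0,0)$ to $(n,n)$ through $(n,0)$ must hit $(n,0)$ exactly at step $n$. Moreover, with $n$ even, $\lfloor n/2+tn\rfloor=n$ precisely when $t=1/2$, so
\[
\P\bigl((n,0)\in\Gamma_{0,n}\bigr)=\P\bigl(\Mid_{0,n}\cdot\mathbf{e}_1 = \lfloor n/2+ n/2\rfloor\bigr).
\]
Thus, once the hypothesis $\mu_0>\mu_{1/2}$ is verified, Theorem \ref{main_thm} identifies the limit as $2\mathcal{J}_{1/2}(\mu_0)$, and the whole task reduces to evaluating $\mu_0$ and $\mathcal{J}_{1/2}(\mu_0)$ in the rate-one exponential setting.

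Next I would invoke the classical Rost shape function for exponential LPP, $\lim_{n\to\infty} n^{-1}G_{\mathbf{0},(\lfloor n/2+tn\rfloor,\lfloor n/2-tn\rfloor)}=(\sqrt{1/2+t}+\sqrt{1/2-t})^{2}$. Plugging in $t=0$ gives $\mu_0=2$, and $t=1/2$ gives $\mu_{1/2}=1$, so the hypothesis $\mu_0>\mu_{1/2}$ is satisfied. For $\mathcal{J}_{1/2}$ the key simplification is that there is only one up-right path from $(0,0)$ to $(n,0)$, namely the horizontal line, so
\[
G_{\mathbf{0},(n,0)} = \sum_{i=1}^{n}\omega_{(i,0)}
\]
is a sum of $n$ i.i.d.\ Exp$(1)$ variables. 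Cramér's theorem then yields $\mathcal{J}_{1/2}(r)=\sup_{\theta<1}\bigl(\theta r+\log(1-\theta)\bigr)=r-1-\log r$ for $r\geq 1$.

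Finally, I would evaluate at $r=\mu_0=2$ to get $\mathcal{J}_{1/2}(2)=1-\log 2$, and Theorem \ref{main_thm} gives
\[
-\lim_{n\to\infty}\frac{1}{n}\log\P\bigl((n,0)\in\Gamma_{0,n}\bigr)=2\mathcal{J}_{1/2}(\mu_0)=2-2\log 2.
\]
There is essentially no obstacle here beyond bookkeeping: the genuine work lies in Theorem \ref{main_thm} itself. The only mild point to check is that the event $\Mid_{0,n}\cdot\mathbf{e}_1=n$ coincides with the geometric corner event (handled in the first paragraph), and that the one-dimensional Cramér rate function is the correct specialization of $\mathcal{J}_t$ at the extreme direction $t=1/2$, which follows from the trivial structure of the path set at that direction.
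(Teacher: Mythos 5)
Your proposal is correct and follows the paper's proof in every essential respect: both apply Theorem \ref{main_thm} with $t=1/2$, use the fact that the unique path from $(0,0)$ to $(n,0)$ makes $G_{\mathbf{0},(n,0)}$ a sum of i.i.d.\ weights so that $\mathcal{J}_{1/2}$ is the Cram\'er rate function $r-1-\log r$, and plug in $\mu_0=2$ from Rost's shape theorem. You simply spell out a couple of steps the paper leaves implicit (the equivalence $(n,0)\in\Gamma_{0,n}\iff \Mid_{0,n}=(n,0)$, and the verification $\mu_0>\mu_{1/2}$), which is fine.
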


The application of Theorem \ref{main_thm} is simple. First, since there is only one path from $(0,0) \to (n,0)$ it follows that $J_{1/2}$ is the rate function of a sum of i.i.d.~random variables. This property of $J_{1/2}$ holds for general weights $\omega_{\mathbf{z}}$ satisfying our assumptions. For rate one exponential random variables it is known that $J_{1/2}(x) = x - \log x - 1$, and the shape function satisfies $\mu_0 = 2$ thanks to \cite{Ros-81}. Thus Corollary \ref{corner} follows.

It is interesting to compare the asymptotics of Corollary \ref{corner} with those of a uniformly chosen up-right path $\mathbf{U}_{0,n}$ from $(0,0) \to (n,n)$, for which
\[
-\lim_{n\to \infty} \frac{1}{n} \log \mathbb{P} \bigg( (n,0) \in \mathbf{U}_{0,n} \bigg) = 2 \log 2 > 2 - 2 \log 2.
\]
In other words, the exponential LPP geodesic has an exponentially larger probability of following the extreme corner path than a uniformly chosen walk does. One intuition behind this phenomenon, as explained in \cite{Alb-Cat-21}, is that the corner path shares weights with a comparatively smaller number of paths, and therefore faces less competition to being the geodesic. The same intuition should be correct for any large deviation of the transversal fluctuations, leading us to conjecture that
\[
2{\mathcal{J}}_{t}(\mu_0) \leq - \lim_{n \to \infty} \frac{1}{n} \log \P \bigg( \Mid(\mathbf{U}_{0,n}) \cdot \mathbf{e}_1 = \floor{n/2+tn} \bigg)
\]
for all $t \in (0, 1/2]$. Note this inequality should hold for a broad class of weight distributions. In this case,  setting \( t = 1/2 \) makes both \( \mathcal{J}_{1/2} \) and the right hand side limit relatively explicit. This raises the possibility of deriving meaningful bounds on \( \mu_0 \) for general weight distributions.

\subsection*{Acknowledgments} The authors thank Zhipeng Liu for his insightful discussions on his work related to this problem.
TA is partially supported by NSF Grant DMS-1811087 and a Simons Foundation Grant. RB is partially supported by a MATRICS grant (MTR/2021/000093) from SERB, Govt.\ of India, DAE project no.\ RTI4001 via ICTS, and the Infosys Foundation via the Infosys-Chandrasekharan Virtual Centre for Random Geometry of TIFR. XS is partially supported by the Wylie
Research Fund at the University of Utah.

\section{Preliminaries}

To simplify the notation, let us extend the LPP model onto $\mathbb{R}^2 \times \mathbb{R}^2$. Without switching the notation, let
$$G_{(a,b), (c,d)} = G_{(\floor a, \floor b), (\floor c,\floor d)},$$
and we note that this change does not affect the asymptotic results: suppose $J$ is a positive constant, for any large constant $r$
$$ e^{-\floor{r}(J+ \epsilon)} \leq e^{-r(J+ \frac\epsilon2)} \leq e^{-\floor{r}(J+ \frac\epsilon2)} \quad \textup{ and } \quad e^{-\floor{r}(J - \epsilon)} \leq e^{-r(J - 2\epsilon)} \leq e^{-\floor{r}(J - 2\epsilon)}.$$

In the following sections, we briefly recall the existing literature on the right tail large deviations of the last-passage value.

\subsection{Right tail large deviation rate function for the last-passage value}\label{lpp_rate}

In this section, we record a few properties of the right tail large deviation rate function of the last-passage value. These results are well established and follow from the superadditivity of the LPP model. Similar results have appeared in \cite{Kes-86-stflour} for FPP and \cite{LDP_poly} for the directed polymer model. Here we note that right-tail deviations in FPP correspond to left-tail deviations in LPP, and conversely, left-tail deviations in FPP correspond to right-tail deviations in LPP. 

Recall the shape function $\mu_t = \lim_{n\to\infty} n^{-1} G_{(0,0), (n/2+tn, n/2-tn)}$.

\begin{proposition}[{\cite[Theorem 3.2]{LDP_poly} \cite[Theorem 5.2]{Kes-86-stflour}}] \label{propJ}
For each $-1/2 \leq t \leq 1/2$ and $r >0$, the following $\mathbb{R}_{\geq 0}$-valued limit exists:
$${\mathcal{J}}_t(r) = -\lim_{n\to \infty} \frac{1}{n}\log \mathbb{P}(G_{\mathbf{0},(n/2+tn, n/2-tn)} \geq rn).$$
The function $\mathcal{J}_t(\bullet)$ is continuous and convex. Moreover, $\mathcal{J}_t(r) = 0$ if and only if $r \leq \mu_t$, and it is strictly increasing for $r \geq \mu_t$.
\end{proposition}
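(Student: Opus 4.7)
The plan is to establish all five claims---existence, finiteness, convexity, continuity, and the zero-set description---by combining the superadditivity $G_{\mathbf{u},\mathbf{w}}\geq G_{\mathbf{u},\mathbf{v}}+G_{\mathbf{v},\mathbf{w}}$ of LPP with the independence of disjoint regions, invoking the three conditions in \eqref{wa} only where monotonicity alone is insufficient. Setting $f_r(n)=-\log\P(G_{\mathbf{0},\mathbf{v}_n}\geq rn)$ with $\mathbf{v}_n=(\lfloor n/2+tn\rfloor,\lfloor n/2-tn\rfloor)$, superadditivity combined with independence (the two passage times $G_{\mathbf{0},\mathbf{v}_m}$ and $G_{\mathbf{v}_m,\mathbf{v}_{m+n}}$ depend on disjoint weights) yields
\[
\P\bigl(G_{\mathbf{0},\mathbf{v}_{m+n}}\geq r(m+n)\bigr)\;\geq\;\P\bigl(G_{\mathbf{0},\mathbf{v}_m}\geq rm\bigr)\,\P\bigl(G_{\mathbf{0},\mathbf{v}_n}\geq rn\bigr),
\]
so $f_r$ is subadditive up to $O(1)$ floor corrections and Fekete's lemma gives $\mathcal{J}_t(r)=\lim n^{-1}f_r(n)=\inf_n n^{-1}f_r(n)$. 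Finiteness follows from the unbounded-support assumption: forcing every one of the $n$ weights on a single fixed up-right path to exceed $r$ gives $\P(G_{\mathbf{0},\mathbf{v}_n}\geq rn)\geq\P(\omega>r)^n>0$, so $\mathcal{J}_t(r)\leq-\log\P(\omega>r)<\infty$.

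Convexity follows from re-running the same superadditive scheme with unequal splits $m=\lfloor\lambda n\rfloor$ and $n-m$ at two target rates $r_1,r_2$, which produces $G_{\mathbf{0},\mathbf{v}_n}\geq(\lambda r_1+(1-\lambda)r_2)n+o(n)$ on the intersection of two independent events and yields $\mathcal{J}_t(\lambda r_1+(1-\lambda)r_2)\leq\lambda\mathcal{J}_t(r_1)+(1-\lambda)\mathcal{J}_t(r_2)$; continuity on $(0,\infty)$ then follows automatically. For the zero set, one direction is immediate from Kingman's theorem $G_{\mathbf{0},\mathbf{v}_n}/n\to\mu_t$ a.s.: for $r<\mu_t$, $\P(G_{\mathbf{0},\mathbf{v}_n}\geq rn)\to 1$ and hence $\mathcal{J}_t(r)=0$, and continuity extends this to $r=\mu_t$.

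The main obstacle is the reverse inclusion, namely $\mathcal{J}_t(r)>0$ for every $r>\mu_t$, since the soft monotonicity arguments above quantify no upper-tail decay. Here the exponential moment is essential. My plan is to apply the Chernoff bound
\[
\P(G_{\mathbf{0},\mathbf{v}_n}\geq rn)\leq e^{-\lambda rn}\,\mathbb{E}\bigl[e^{\lambda G_{\mathbf{0},\mathbf{v}_n}}\bigr],
\]
together with $\Lambda(\lambda):=\lim_n n^{-1}\log\mathbb{E}[e^{\lambda G_{\mathbf{0},\mathbf{v}_n}}]$, whose existence on $[0,\alpha)$ follows from another Fekete argument applied to the superadditive sequence $\log\mathbb{E}[e^{\lambda G_{\mathbf{0},\mathbf{v}_n}}]$ and whose finiteness there is ensured by the union bound $\mathbb{E}[e^{\lambda G_{\mathbf{0},\mathbf{v}_n}}]\leq\binom{n}{\lfloor n/2+tn\rfloor}\mathbb{E}[e^{\lambda\omega}]^n$. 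Taking the Legendre transform gives $\mathcal{J}_t(r)\geq\sup_{\lambda\in[0,\alpha)}(\lambda r-\Lambda(\lambda))$, and the delicate technical input is to verify $\Lambda'(0^+)=\mu_t$; I would extract this from the identity $n^{-1}\log\mathbb{E}[e^{\lambda G_{\mathbf{0},\mathbf{v}_n}}]=\lambda\mathbb{E}[G_{\mathbf{0},\mathbf{v}_n}]/n+n^{-1}\log\mathbb{E}[e^{\lambda(G_{\mathbf{0},\mathbf{v}_n}-\mathbb{E}[G_{\mathbf{0},\mathbf{v}_n}])}]$ combined with exponential concentration of $G_{\mathbf{0},\mathbf{v}_n}$ about its mean (obtained by truncating the weights at a level $M$, applying bounded-differences concentration to the truncated LPP, and controlling the truncation excess using the exponential moment). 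With $\Lambda'(0^+)=\mu_t$ in hand, the supremum is strictly positive for every $r>\mu_t$. Strict monotonicity on $[\mu_t,\infty)$ is then a clean convexity consequence: if $\mathcal{J}_t$ were constant on any interval $[a,b]$ with $\mu_t\leq a<b$, the nondecreasing right derivative of $\mathcal{J}_t$ would have to vanish on $[0,b]$, forcing $\mathcal{J}_t\equiv 0$ there---contradicting the strict positivity already established on $(\mu_t,\infty)$.
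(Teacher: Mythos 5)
The paper does not give its own proof of this proposition; it delegates entirely to \cite[Theorem 3.2]{LDP_poly} and \cite[Theorem 5.2]{Kes-86-stflour}. Your outline follows the same superadditivity-plus-subadditive-limit strategy those references use, so there is no competing ``paper route'' to contrast with; what follows is an assessment of the gaps in your plan.

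Most of the argument is sound and easy to complete: the Fekete argument via $\P(G_{\mathbf 0,\mathbf v_{m+n}}\ge r(m+n))\ge\P(G_{\mathbf 0,\mathbf v_m}\ge rm)\P(G_{\mathbf 0,\mathbf v_n}\ge rn)$ gives existence (the $O(1)$ rounding errors from the floors are harmless because the weights are nonnegative, so shifting the endpoint by a unit vector only increases $G$); the unbounded support gives $\mathcal J_t(r)\le-\log\P(\omega>r)<\infty$; the unequal-split version of the same superadditivity argument gives joint convexity, which yields continuity on the open set where $\mathcal J_t$ is finite; Kingman gives $\mathcal J_t(r)=0$ for $r<\mu_t$ and continuity extends this to $r=\mu_t$; and once strict positivity above $\mu_t$ is known, your convexity-plus-monotonicity argument for strict increase is correct.

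The one step you should not treat as routine is the proof of $\Lambda'(0^+)=\mu_t$, which is where \emph{all} of the real difficulty in the proposition lives. Your reduction is the right one — Jensen immediately gives $\Lambda'(0^+)\ge\mu_t$, and $\Lambda'(0^+)\le\mu_t$ needs a bound of the form $\mathbb E\bigl[e^{\lambda(G_n-\mathbb E G_n)}\bigr]\le e^{Cn\lambda^2}$ uniformly in $n$ for small $\lambda\ge 0$, combined with $\mathbb E G_n/n\le\mu_t$ (superadditivity of the mean). But the phrase ``applying bounded-differences concentration to the truncated LPP'' glosses over the crucial subtlety: a naive McDiarmid bound over the $\Theta(n^2)$ lattice sites, each with increment $\le M$, produces $\mathbb E[e^{\lambda(G^M_n-\mathbb E G^M_n)}]\le e^{Cn^2M^2\lambda^2}$, which is useless here because the quadratic term then dominates $\lambda\mu_t n$ and you recover nothing. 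What is actually needed is the Kesten--Talagrand refinement that exploits the fact that a geodesic visits only $O(n)$ sites, so the sum of squared conditional increments is $O(nM^2)$ rather than $O(n^2M^2)$; this sharpens the exponent to $Cn M^2\lambda^2$ and makes the scheme close. You also need to be careful about the truncation correction (e.g.\ $M\asymp\log n$ so that the event $\{\max_{\mathbf z}\omega_{\mathbf z}>M\}$ has probability $o(e^{-\lambda\mu_t n})$, or a direct comparison using $\sum_{\mathbf z}(\omega_{\mathbf z}-M)^+$), but that part works as you describe. In short: the route is right but the concentration lemma you need is Kesten's, not vanilla bounded differences.

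One small additional caveat: when concluding $\mathcal J_t(\mu_t)=0$ from continuity, you are implicitly using $\mu_t>0$, which holds under the stated assumption of unbounded-support nonnegative weights; worth recording explicitly, since continuity was only established on $(0,\infty)$.
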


Next, we will record several known results involving the direction parameter $t$, beginning with the convexity of \( {\mathcal{J}}_t(r) \) in its two variables \( (t, r) \).
\begin{proposition}[{\cite[Theorem 3.3]{LDP_poly}}]
The rate function ${\mathcal{J}}_t(r)$ is  convex in its variables $(t,r)$. More precisely, fix $r_1, r_2 \in \mathbb{R}_{>0}$, $-1/2 \leq t_1 \leq t_2 \leq 1/2$ and $\lambda \in (0,1)$. Let $(t,r) = \lambda (t_1, r_1) + (1-\lambda) (t_2, r_2)$. Then,
$${\mathcal{J}}_t(r) \leq \lambda {\mathcal{J}}_{t_1}(r_1) + (1-\lambda) {\mathcal{J}}_{t_2}(r_2).$$
Furthermore, the function  ${\mathcal{J}}_t(r): (0, 1/2)\times \mathbb{R}_{>0} \to \mathbb{R}_{\geq 0}$ is continuous.
\end{proposition}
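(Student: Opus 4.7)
The plan is to prove convexity by a path-concatenation/superadditivity argument, and then to deduce continuity from joint convexity together with finiteness of $\mathcal{J}_t(r)$ on the open region.

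For convexity, fix $-1/2\leq t_1\leq t_2\leq 1/2$, $r_1,r_2>0$, and $\lambda\in(0,1)$, and set $(t,r)=\lambda(t_1,r_1)+(1-\lambda)(t_2,r_2)$. Writing $\mathbf{v}_n=(n/2+tn,\,n/2-tn)$, introduce the intermediate point $\mathbf{u}_n=\lambda(n/2+t_1 n,\,n/2-t_1 n)$; using $t=\lambda t_1+(1-\lambda)t_2$ one checks that $\mathbf{v}_n-\mathbf{u}_n=(1-\lambda)(n/2+t_2 n,\,n/2-t_2 n)$, so by translation invariance of the weight field, $G_{\mathbf{u}_n,\mathbf{v}_n}$ is equidistributed with the LPP value at scale $(1-\lambda)n$ in direction $t_2$. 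The bounds $|t_1|,|t_2|\leq 1/2$ make both $\mathbf{u}_n$ and $\mathbf{v}_n-\mathbf{u}_n$ coordinatewise nonnegative, so all sub-LPPs are well defined; integer-rounding is absorbed by the floor convention introduced at the start of Section 2. The concatenation inequality $G_{\mathbf{0},\mathbf{v}_n}\geq G_{\mathbf{0},\mathbf{u}_n}+G_{\mathbf{u}_n,\mathbf{v}_n}$, the independence of the two summands (they are functions of disjoint weight sets, since the convention in \eqref{G} excludes the starting vertex), and the identity $rn=\lambda r_1 n+(1-\lambda)r_2 n$ together yield
$$\mathbb{P}(G_{\mathbf{0},\mathbf{v}_n}\geq rn)\;\geq\;\mathbb{P}(G_{\mathbf{0},\mathbf{u}_n}\geq\lambda r_1 n)\cdot\mathbb{P}(G_{\mathbf{u}_n,\mathbf{v}_n}\geq(1-\lambda)r_2 n).$$
Taking $-\tfrac1n\log$ and letting $n\to\infty$, Proposition \ref{propJ} applied at scales $\lambda n$ and $(1-\lambda)n$ identifies the limits of the two factors on the right as $\lambda\mathcal{J}_{t_1}(r_1)$ and $(1-\lambda)\mathcal{J}_{t_2}(r_2)$, giving the claimed convexity inequality.

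For continuity on $(0,1/2)\times\mathbb{R}_{>0}$, it suffices to show $\mathcal{J}_t(r)$ is finite there, because a finite convex function on an open convex subset of $\mathbb{R}^2$ is automatically continuous (in fact locally Lipschitz) by standard convex analysis. Finiteness follows from a one-path lower bound: along any fixed up-right path from $\mathbf{0}$ to $\mathbf{v}_n$ (of length $n$), requiring every weight to exceed $r$ gives $\mathbb{P}(G_{\mathbf{0},\mathbf{v}_n}\geq rn)\geq\mathbb{P}(\omega_{\mathbf{z}}\geq r)^n$, whence $\mathcal{J}_t(r)\leq -\log\mathbb{P}(\omega_{\mathbf{z}}\geq r)<\infty$ by the unbounded-support assumption in \eqref{wa}.

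The only substantive step is the correct choice of $\mathbf{u}_n$ so that the trip $\mathbf{0}\to\mathbf{v}_n$ splits into pieces matching the directions $t_1, t_2$ at scales $\lambda n$ and $(1-\lambda)n$; once that arithmetic is in place, the remainder is a routine assembly of superadditivity, independence, the one-variable LDP (Proposition \ref{propJ}), and standard convex analysis, and I anticipate no substantial analytic obstacle.
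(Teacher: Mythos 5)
Your argument is correct. The paper itself does not supply a proof here; it cites \cite[Theorem 3.3]{LDP_poly} (an analogous result for directed polymers). Your self-contained argument is the standard one that such references use: the split point $\mathbf{u}_n=\lambda(n/2+t_1n,\,n/2-t_1n)$ is chosen precisely so that both $\mathbf{u}_n$ and $\mathbf{v}_n-\mathbf{u}_n$ lie in the respective directions $t_1$ and $t_2$ at the correct $\lambda n$ and $(1-\lambda)n$ scales; superadditivity plus independence of the two disjoint-weight sub-passage-times gives the product lower bound on the probability; dividing by $-n$ and invoking the one-direction LDP of Proposition \ref{propJ} at each scale yields joint convexity; and continuity then follows from convexity once one shows the rate function is finite on the open region, which your one-path lower bound does cleanly using the unbounded-support hypothesis in \eqref{wa}. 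The only point worth stating a bit more explicitly is the integer-rounding bookkeeping — one should note that shifting $\mathbf{u}_n$ by $O(1)$ changes neither $G_{\mathbf{0},\mathbf{u}_n}$ nor $G_{\mathbf{u}_n,\mathbf{v}_n}$ by more than an $O(1)$ amount (using the exponential moment bound on the weights), hence does not affect the $n\to\infty$ limit — but this is exactly the kind of issue the floor convention in Section 2 is designed to absorb, and does not affect the validity of the argument.
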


A second consequence is a monotonicity property in the directional parameter $t$.
\begin{proposition}\label{mono_t}
For each $r \in \mathbb{R}_{>0}$ and $0\leq t_1 < t_2 \leq 1/2$, it holds that   $${\mathcal{J}}_{t_1}(r) \leq {\mathcal{J}}_{t_2}(r).$$
\end{proposition}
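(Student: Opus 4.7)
The plan is to combine two structural facts: joint convexity of $\mathcal{J}_t(r)$ in $(t,r)$ (which the previous proposition already supplies) and a left-right reflection symmetry $\mathcal{J}_{-t}(r) = \mathcal{J}_t(r)$. Specializing the joint convexity to $r_1 = r_2 = r$ immediately gives that, for each fixed $r > 0$, the map $s \mapsto \mathcal{J}_s(r)$ is convex on $[-1/2, 1/2]$. The symmetry I would obtain by noting that, because the weights $\{\omega_{\mathbf{z}}\}$ are i.i.d., the LPP model is distribution-preserving under the reflection $(x, y) \mapsto (y, x)$. This reflection sends $(\floor{n/2 + tn}, \floor{n/2 - tn})$ to $(\floor{n/2 - tn}, \floor{n/2 + tn})$, which is the endpoint associated with parameter $-t$, so the right-tail probability defining $\mathcal{J}_t(r)$ is exactly equal to the one defining $\mathcal{J}_{-t}(r)$, and hence $\mathcal{J}_{-t}(r) = \mathcal{J}_t(r)$.

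With these two ingredients in hand, the plan is to write the target direction $t_1$ as a convex combination of the two symmetric directions $-t_2$ and $t_2$. Explicitly, $t_1 = \lambda(-t_2) + (1-\lambda) t_2$ with
$$\lambda = \tfrac{1}{2}\bigl(1 - t_1/t_2\bigr) \in (0, 1/2],$$
using $0 \leq t_1 < t_2$. Convexity in $t$ with fixed $r$ then yields
$$\mathcal{J}_{t_1}(r) \leq \lambda \, \mathcal{J}_{-t_2}(r) + (1-\lambda)\, \mathcal{J}_{t_2}(r),$$
and applying $\mathcal{J}_{-t_2}(r) = \mathcal{J}_{t_2}(r)$ collapses the right-hand side to $\mathcal{J}_{t_2}(r)$, which is the desired inequality.

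I do not expect any substantive obstacle in this argument, since the two ingredients come directly from a preceding proposition and a distributional symmetry of the model. The only small point of care is that the floor functions in the definition of the endpoint are not literally symmetric under $(x,y) \mapsto (y,x)$; this is absorbed by the asymptotic-equivalence observation already recorded at the start of the "Preliminaries" section, where off-by-one changes in the endpoint only introduce subexponential corrections that vanish in the $-n^{-1}\log$ limit defining $\mathcal{J}_t(r)$. Consequently the reflection symmetry holds in the limit, and the proof reduces to the short convexity-plus-symmetry computation above.
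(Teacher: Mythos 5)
Your argument is correct and essentially identical to the paper's: both proofs combine the joint convexity of $\mathcal{J}_t(r)$ in $t$ with the reflection symmetry $\mathcal{J}_{-t}(r)=\mathcal{J}_t(r)$, the only difference being that you express $t_1$ directly as a convex combination of $\pm t_2$ in a single step, whereas the paper first derives $\mathcal{J}_0(r)\leq\mathcal{J}_{t_2}(r)$ from $0=-t_2/2+t_2/2$ and then writes $t_1=\lambda t_2+(1-\lambda)\cdot0$. This is a cosmetic streamlining, not a different route.
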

\begin{proof}
First, we will show that ${\mathcal{J}}_{0}(r) \leq {\mathcal{J}}_{t}(r)$ for all $0<t\leq 1/2$. This directly follows from our convexity result. Rewrite $0 = -t/2 + t/2$, by convexity and symmetry
$${\mathcal{J}}_0(r) \leq \frac{1}{2} {\mathcal{J}}_{-t}(r) + \frac{1}{2} {\mathcal{J}}_{t}(r) = {\mathcal{J}}_{t}(r).$$
Now fix $0\leq t_1 < t_2 \leq 1/2$, let us rewrite $t_1 = \lambda t_2 + (1-\lambda)0$ where $\lambda = \frac{t_1}{t_2}$. By convexity again, 
$${\mathcal{J}}_{t_1}(r) \leq \lambda {\mathcal{J}}_{t_2}(r) + (1-\lambda) {\mathcal{J}}_{0}(r) \leq \lambda {\mathcal{J}}_{t_2}(r) + (1-\lambda) {\mathcal{J}}_{t_2}(r) = {\mathcal{J}}_{t_2}(r),$$
and this completes the proof.
\end{proof}

\subsection{Large deviation estimates of the last-passage value}

In this section, we record the following estimates of the last-passage value on the large deviation scale. 
The right tail large deviation estimate is stated below. 
\begin{proposition}\label{ld_right}
For each $|t| \leq 1/2$ and $\epsilon>0$, there exists a positive constant $n_0$ such that for each $n\geq n_0$ and $x>0$, the following holds
$$
e^{-n(\mathcal{J}_t(\mu_t + x) + \epsilon)} \leq \mathbb{P}(G_{\mathbf{0}, (n/2 + tn,n/2-tn)} \geq \mu_t n + x n) \leq e^{-n\mathcal{J}_t(\mu_t + x)}.
$$
\end{proposition}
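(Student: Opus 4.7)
Write $p_n(r) = \mathbb{P}(G_{\mathbf{0}, (n/2+tn,n/2-tn)} \geq rn)$ and $a_n(r) = -\log p_n(r)$, with $r = \mu_t + x$. For the upper bound, superadditivity $G_{\mathbf{0},\mathbf{u}+\mathbf{v}} \geq G_{\mathbf{0},\mathbf{u}} + G_{\mathbf{u},\mathbf{u}+\mathbf{v}}$, combined with translation invariance and independence of the weights on disjoint regions, gives $p_{m+n}(r) \geq p_m(r)\,p_n(r)$. Hence $a_n(r)$ is subadditive in $n$, and Fekete's subadditive lemma yields $a_n(r)/n \geq \inf_m a_m(r)/m = \mathcal{J}_t(r)$ for every $n \geq 1$, equivalently $p_n(r) \leq e^{-n\mathcal{J}_t(r)}$. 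Since this holds for all $n \geq 1$ and all $r > 0$, the upper bound is immediate and automatically uniform in $x > 0$.

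For the lower bound we must show $a_n(r)/n \leq \mathcal{J}_t(r) + \epsilon$ for $n \geq n_0$ uniformly in $x > 0$. The workhorse is the quantitative Fekete estimate: writing $n = km + s$ with $0 \leq s < m$ and iterating subadditivity,
$$\frac{a_n(r)}{n} \leq \frac{a_m(r)}{m} + \frac{a_s(r)}{n}.$$
By Proposition~\ref{propJ}, for each $r > \mu_t$ there exists $m = m(r)$ such that $a_m(r)/m \leq \mathcal{J}_t(r) + \epsilon/2$; ensuring $a_s(r)/n \leq \epsilon/2$ would then close the argument. The essential task is making these choices uniform in $x > 0$, which I would split into three regimes.

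For $r$ in a compact interval $[\mu_t + \delta, R]$, continuity of $\mathcal{J}_t$ (Proposition~\ref{propJ}) and of each $r \mapsto a_m(r)/m$, together with the monotone decrease of $a_{km}(r)/(km)$ in $k$ (from superadditivity), yields uniform convergence on this compact range via a Dini-type argument: a single $m$ works uniformly and $\max_{s<m} a_s(r)$ is continuous and bounded. For $r \in (\mu_t, \mu_t + \delta]$ with $\delta$ chosen so that $\mathcal{J}_t(\mu_t+\delta) \leq \epsilon/4$, the inclusion $\{G_n \geq rn\} \supseteq \{G_n \geq (\mu_t+\delta)n\}$ gives $p_n(r) \geq p_n(\mu_t+\delta)$; combined with the compact-case conclusion at $\mu_t+\delta$, this yields $p_n(r) \geq e^{-n\epsilon/2} \geq e^{-n(\mathcal{J}_t(r)+\epsilon)}$ since $\mathcal{J}_t(r) \geq 0$. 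The main obstacle is the large-$r$ regime, where the exponential moment condition in \eqref{wa} is essential: the union bound over the at most $2^n$ up-right paths combined with Chernoff gives a linear lower envelope $\mathcal{J}_t(r) \geq \alpha r - C_1$, while restricting to a single up-right path and applying Cramér for i.i.d.\ sums provides a matching upper envelope $a_n(r)/n \leq \alpha r + C_2$ for $r \geq R$ and $n$ sufficiently large. Together these envelopes let us absorb the Fekete correction into $\epsilon$ uniformly for $r \geq R$, completing the proof of uniformity in $x > 0$.
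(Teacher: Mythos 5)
Your upper bound is exactly the paper's: superadditivity of the passage value makes $a_n(r) := -\log p_n(r)$ subadditive, so Fekete gives $\mathcal{J}_t(r) = \inf_n a_n(r)/n$, i.e.\ $p_n(r) \leq e^{-n\mathcal{J}_t(r)}$ for every $n$ --- a non-asymptotic inequality and hence automatically uniform in $r$. For the lower bound the paper writes only ``follows from Proposition~\ref{propJ},'' which is just the statement $\lim_n a_n(r)/n = \mathcal{J}_t(r)$ for each fixed $r$; in every later application the threshold $x$ is held fixed (for instance $x = \mu_0 + \delta - \mu_{t+100\delta}$), so the proposition is implicitly read with $n_0 = n_0(t,\epsilon,x)$.

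You instead aim for uniformity in $x>0$, and your large-$r$ regime does not close. The two envelopes $\mathcal{J}_t(r) \geq \alpha r - C_1$ (Chernoff plus a $2^n$-path union bound) and $a_n(r)/n \leq \alpha r + C_2$ (single-path Cram\'er) differ by the \emph{fixed} constant $C_1 + C_2$, which cannot be driven below an arbitrary $\epsilon$, so nothing forces the Fekete remainder to be $o(1)$ uniformly. In fact the uniform claim is false. Take $t = 1/2$ with rate-one exponentials: $G_{\mathbf{0},(n,0)}$ is a Gamma$(n,1)$ variable and $\mathcal{J}_{1/2}(r) = r - \log r - 1$, and an incomplete-gamma computation gives, for $r$ large,
$$
-\frac{1}{n}\log\mathbb{P}\big(G_{\mathbf{0},(n,0)} \geq rn\big) \;=\; \mathcal{J}_{1/2}(r) + \frac{\log r}{n} + O\!\left(\frac{\log n}{n}\right),
$$
so for any fixed $n$ the discrepancy exceeds a given $\epsilon$ once $r \geq e^{n\epsilon}$, and no single $n_0$ works for all $x > 0$. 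Reading the proposition with $n_0$ depending on $x$ makes the lower bound immediate from Proposition~\ref{propJ}; your small-$r$ squeeze and the finite-subcover (rather than literal Dini) argument on $[\mu_t+\delta,R]$ are both sound but then unnecessary.
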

The lower bound follows from Proposition \ref{propJ}, while the upper bound follows from Fekete's Subadditive Lemma 
$$
\mathcal{J}_t(r) = \lim_{n\to \infty} -\frac{1}{n} \log \mathbb{P}(G_{\mathbf{0}, (n/2 + tn,n/2-tn)} \geq rn) = \inf_n \bigg\{ -\frac{1}{n} \log \mathbb{P}(G_{\mathbf{0}, (n/2 + tn,n/2-tn)} \geq rn) \bigg\}
$$
as $\{-\log \mathbb{P}(G_{\mathbf{0}, (n/2 + tn,n/2-tn)} \geq rn)\}_n$ is a subadditive sequence of real numbers.

The left tail large deviation estimate is stated below.

\begin{proposition}\label{ld_left}
For each $\epsilon>0$, there exists a positive constant $c$ such that 
$$\mathbb{P}(G_{\mathbf{0}, (n/2 ,n/2)} \leq \mu_0 n - \epsilon n) \leq e^{-c n^2}.$$
\end{proposition}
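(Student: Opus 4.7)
The plan is to use super-additivity of last-passage percolation together with a concentration inequality for sums of i.i.d.\ random variables. Fix a large constant $m$ (depending on $\epsilon$) and set $k = \lfloor n/(2m) \rfloor$. Using intermediate lattice points $p_j = (jm, jm)$ for $j = 0, 1, \ldots, k$ along the diagonal, the super-additivity of $G$ yields
\[
G_{\mathbf{0},(n/2,n/2)} \;\geq\; \sum_{j=1}^{k} X_j,
\]
where $X_j = G_{p_{j-1}, p_j}$ are i.i.d.\ copies of $G_{\mathbf{0},(m,m)}$. By Fekete's lemma applied to the super-additive sequence $l \mapsto \mathbb{E}[G_{\mathbf{0},(l,l)}]$, the ratio $\mathbb{E}[G_{\mathbf{0},(l,l)}]/(2l)$ increases to $\mu_0$, so I can pick $m$ large enough that $\mathbb{E}[X_j] \geq 2(\mu_0 - \epsilon/4)m$; the target event then becomes a downward deviation of $\sum_j X_j$ from its mean by order $n$.

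The exponential moment assumption on $\omega$ transfers to each $X_j$ via $X_j \leq \sum_{z \in [0,m]^2} \omega_z$, so $X_j$ is sub-exponential. A standard Cram\'er/Bernstein bound applied to $k = \Theta(n/m)$ i.i.d.\ sub-exponential summands supplies only an $e^{-cn}$ rate. To sharpen this to the claimed $e^{-cn^2}$, one must exploit the full two-dimensional structure of $[0, n/2]^2$, which contains $\Theta(n^2)$ i.i.d.\ weights. A natural route is to partition the box into a $k \times k$ grid of disjoint $m \times m$ sub-boxes with i.i.d.\ sub-LPP values $Z_{ij}$, note that $G$ dominates the meta-LPP $\tilde G_k$ on this $k \times k$ meta-grid, and observe that $\{G \leq (\mu_0-\epsilon)n\}$ forces every up-right meta-path to have small sum. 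A BK-type combinatorial argument, or a min-cut/max-flow duality on the meta-grid, then forces a macroscopic fraction of the $k^2 = \Theta(n^2/m^2)$ meta-weights $Z_{ij}$ to take atypically small values simultaneously. Applying Chernoff across these $\Theta(n^2)$ i.i.d.\ meta-weights delivers the desired $e^{-cn^2}$ decay.

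I expect the combinatorial step---rigorously converting the scalar event $\{G \leq (\mu_0-\epsilon)n\}$ into a quantitative lower bound on the number of atypically small meta-weights---to be the main obstacle. The one-dimensional Bernstein argument supplies only an exponent of order $n$; the additional factor of $n$ in the exponent must come from independence across the orthogonal direction of the meta-grid, and extracting this will require a careful geometric argument on meta-paths. A concrete execution may proceed by choosing $m$ so large that $\mathbb{P}(Z_{ij} \leq 2(\mu_0 - \epsilon/8)m)$ is very small (using that $Z_{ij}$ has a large deviation rate function with positive rate for values below the limit), and then showing that any realization with $\tilde G_k \leq (\mu_0-\epsilon)n$ must contain a ``blocking set'' of roughly $\Theta(k^2)$ such atypical sub-boxes.
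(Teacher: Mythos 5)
Your high-level diagnosis is correct: a one-dimensional sub-additive chain only yields $e^{-cn}$, and the missing factor of $n$ in the exponent must come from independence in the direction transverse to the geodesic. But the step you yourself flag as the main obstacle---rigorously deducing from $\{G_{\mathbf 0,(n/2,n/2)} \le (\mu_0-\epsilon)n\}$ that a macroscopic fraction, $\Theta(k^2)$, of the meta-weights $Z_{ij}$ must be atypically small---is left entirely to a ``BK-type combinatorial argument, or min-cut/max-flow duality'' that is not carried out, and it is genuinely nontrivial: the event only controls the \emph{maximum} over meta-paths, and turning a single scalar upper bound into a quantitative lower bound on the count of small blocks requires a careful geometric lemma (naive averaging over paths, or a min-cut bound, does not deliver $\Theta(k^2)$ without further work). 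As written, the proposal is a plan with an acknowledged hole at its core, not a proof.

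The paper's route sidesteps this combinatorial step entirely. Rather than tiling the box and counting bad blocks, it takes $\Theta(n)$ \emph{parallel, disjoint} antidiagonal-direction strips through the box, with endpoints $(3iK,0)$ and $(n/2,n/2-3iK)$. Writing $G^K_i$ for the best passage time restricted to the $i$-th strip, two facts do all the work: (i) $G^K_i \le G_{(3iK,0),(n/2,n/2-3iK)} \le G_{\mathbf 0,(n/2,n/2)}$ because the weights are non-negative and the endpoints are nested, so the event forces \emph{every} $G^K_i \le (\mu_0-\epsilon)n$ simultaneously, with no counting argument needed; (ii) the strips are disjoint, so the $G^K_i$ are independent, and each satisfies a left-tail bound $\mathbb P(G^K_i \le (\mu_0-\epsilon)n) \le e^{-cn}$ once $K$ is large enough that the restricted strip LPP has mean close to $\mu_0 n$. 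Multiplying over $i = 0,\dots,\delta n$ gives $e^{-c\delta n^2}$. In short: where you seek $\Theta(n^2)$ weakly-forced small events and a hard combinatorial lemma to force them, the paper finds $\Theta(n)$ automatically-forced independent $e^{-cn}$ events, which is both simpler and complete.
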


The above proposition is essentially the same as \cite[Theorem 5.2]{Kes-86-stflour}, which proves the upper tail large deviations in FPP with bounded edge weights. It was discussed in \cite[Section 4.1]{BGS19} how the same proof can be adapted to the setting of Proposition \ref{ld_left} but this was not formally stated as a result. For the sake of completeness, we shall provide a short sketch of the proof.

\begin{proof}[Sketch]
The first step is to notice that for fixed but large $K$, on the event $\{G_{\mathbf{0}, (n/2 ,n/2)} \leq \mu_0 n - \epsilon n\}$, one has $G_{(3iK,0), (n/2, n/2-3iK)}\le \mu_0 n - \epsilon n$ for each $0\le i\le \delta n$. Let us denote $G^{K}_{(3iK,0), (n/2, n/2-3iK)}$ to be weight of the maximum weight path joining $(3iK,0)$, $(n/2, n/2-3iK)$ that is contained within a width $K$ strip around the straight line joining the end points. By definition, $G^{K}_{(3iK,0), (n/2, n/2-3iK)}$ are independent across $i$ and since $G^{K}_{(3iK,0), (n/2, n/2-3iK)}\le G_{(3iK,0), (n/2, n/2-3iK)}$ we have 
$$\mathbb{P}(G_{\mathbf{0}, (n/2 ,n/2)} \leq \mu_0 n - \epsilon n) \leq \prod_{i=0}^{\delta n} \P(G^{K}_{(3iK,0), (n/2, n/2-3iK)}\le \mu_0 n - \epsilon n).$$
One can now observe that given $\epsilon>0$, there exists $K$ sufficiently large such that $\mathbb{E} G_{\mathbf{0},(K/2,K/2)}\ge (\mu_0-\epsilon/2)K$. A large deviation estimate then implies that (see \cite[Lemma 2.2]{BGS19}) that 
$$\P\bigg(G^{K}_{\mathbf{0},(n/2,n/2)}\le (\mu_0-\frac{3\epsilon}{4})n\bigg)\le e^{-cn}.$$
By translation invariance, and choosing $\delta$ sufficiently small depending on $\epsilon,K,\mu_0$, one also gets for some $c>0$ and all $n$ sufficiently large
$$\P(G^{K}_{(3iK,0), (n/2, n/2-3iK)}\le \mu_0 n - \epsilon n)\le e^{-cn}$$
for all $i\le \delta n$. The result follows.
\end{proof}

\section{Proof of the upper bound in Theorem \ref{main_thm}}

In this section, we will show that for each $\epsilon > 0$, 
\begin{equation}\label{finite_ub}
\mathbb{P}\Big(\Mid_{0,n} \cdot \mathbf{e}_1 \geq  n/2+tn\Big) \leq n e^{-2n({\mathcal{J}}_t(\mu_0) - \epsilon)},
\end{equation}
and this implies the upper bound in Theorem \ref{main_thm} since for $n$ sufficiently large
$$
ne^{-2n({\mathcal{J}}_t(\mu_0) - \epsilon)} \leq e^{-2n({\mathcal{J}}_t(\mu_0) - 2\epsilon)}.
$$

To show \eqref{finite_ub}, it suffices to prove the following bound 
\begin{equation}\label{pt_est}
\mathbb{P}\Big(\Mid_{0,n} = (n/2+tn, n/2-tn)\Big) \leq  e^{-2n({\mathcal{J}}_t(\mu_0) - \epsilon)}.
\end{equation}
Then, \eqref{finite_ub} follows from a union bound and the monotonicity of the rate function from Proposition \ref{mono_t}, as shown below
\begin{align*}
\mathbb{P}\Big(\Mid_{0,n} \cdot \mathbf{e}_1 \geq  n/2+tn\Big) &\leq  \sum_{s = nt}^n\mathbb{P}\Big(\Mid_{0,n} = (n/2+s, n/2-s)\Big)\\
& \leq \sum_{s = \floor{nt}}^n  e^{-2n({\mathcal{J}}_{s/n}(\mu_0) - \epsilon)} \qquad \text{by \eqref{pt_est}}\\
& \leq n e^{-2n({\mathcal{J}}_t(\mu_0) - \epsilon)}.
\end{align*}

For the rest of this section, we will prove \eqref{pt_est}.
Starting with the setup, recall the shape function $
\mu_t = \lim_{n\to\infty} n^{-1} G_{(0,0), (n/2+tn, n/2-tn)} 
$. We fix $0< t \leq 1/2$ such that $\mu_0 > \mu_t$. 
For each $\epsilon >0$, we will fix a small value  $\delta >0$ such that 
\begin{equation}\label{d_fix}
\mu_0 - \delta > \mu_t \qquad \textup{ and } \qquad \;\Big|\;\mathcal{J}_t(\mu_0 - \delta) - \mathcal{J}_t(\mu_0)\;\Big|\; \leq \epsilon.
\end{equation}
This is possible due to the continuity result for the rate function in Proposition \ref{propJ}. Now observe that the midpoint $\Mid_{0,n}$ passes through a particular point if and only if the concatenation of the longest path to and from that point is also the longest path connecting $(0,0)$ to $(n,n)$, i.e.
\[
\Mid_{0,n} = (n/2 + tn, n/2 - tn) \iff G_{\mathbf{0}, (n/2+tn,n/2-tn)} + G_{(n/2+tn,n/2-tn), (n,n)} = G_{\mathbf{0}, (n,n)}
\]
By decomposing the right-hand side according to the value of $G_{\mathbf{0}, (n,n)}$ we obtain
\begin{align}
\mathbb{P}(\Mid_{0,n} = (n/2+tn, n/2-tn)) & \leq \mathbb{P}(G_{\mathbf{0}, (n,n)} \leq 2\mu_0 n - 2 \delta n) \label{up1} \\
 +  \mathbb{P}(&G_{\mathbf{0}, (n/2+tn,n/2-tn)} + G_{(n/2+tn,n/2-tn), (n,n)} \geq 2\mu_0 n - 2\delta n )\label{up2}.
\end{align}
Note that $\eqref{up1} \leq e^{-c_\delta n^2}$ due to Proposition \ref{ld_left}. Then, the term in \eqref{up2} can be estimated as follows
\begin{align*}
\eqref{up2} 
&=\mathbb{P}(G_{\mathbf{0}, (n/2+tn,n/2-tn)} + G_{(n/2+tn,n/2-tn), (n+2tn,n-2tn)} \geq 2\mu_0 n - 2\delta n )\\
& \leq   \mathbb{P}(G_{\mathbf{0}, (n+2tn,n-2tn)} \geq (\mu_0 - \delta)2n )^2\\
& \leq  e^{-2n(\mathcal{J}_t(\mu_0 - \delta)) } \qquad \textup{by Proposition \ref{ld_right}}\\
&\leq  e^{-2n(\mathcal{J}_t(\mu_0) - \epsilon)} \qquad \textup{by \eqref{d_fix}}.
\end{align*}
This completes the proof of \eqref{pt_est} and concludes this section.

\section{Proof of the lower bound in Theorem \ref{main_thm}}

In this section, we will show the weaker lower bound that for each $\epsilon > 0$, 
\begin{equation}\label{finite_lb}
\mathbb{P}\Big(\Mid_{0,n} \cdot \mathbf{e}_1 \geq  n/2+tn\Big) \geq e^{-2n({\mathcal{J}}_t(\mu_0) + \epsilon)}.
\end{equation}
The lower probability bound in Theorem \ref{main_thm} then follows from \eqref{finite_lb} above with an averaging argument, which we now explain.
Let us define the event 
$$
\mathcal{E}_k = \bigg\{ \Mid_{(k,-k),(n+k, n-k)} \cdot \mathbf{e}_1 =  n/2+k+tn \bigg\}.
$$
By translation invariance, we see that 
\begin{equation}\label{e0}
\mathbb{P}(\mathcal{E}_0) = \frac{1}{n^{2}} \mathbb{E} \bigg[ \sum_{k=0}^{n^2-1} \mathbbm{1}_{\mathcal{E}_k} \bigg] \geq \frac{1}{n^2}\mathbb{P} \bigg( \sum_{k=0}^{n^2-1} \mathbbm{1}_{\mathcal{E}_k} \geq 1 \bigg).\end{equation}

Next, we will show that
\begin{equation}\label{sum_lb}
\mathbb{P} \bigg( \sum_{k=0}^{n^2-1} \mathbbm{1}_{\mathcal{E}_k} \geq 1 \bigg) \geq e^{-2n({\mathcal{J}}_t(\mu_0) + \epsilon)}.
\end{equation}
Let us define the event 
$$
\mathcal{H}_k =  \Big\{ \Mid_{(k,-k),(n+k, n-k)} \cdot \mathbf{e}_1  \geq   n/2+k+tn \Big\}.
$$
Due to path monotonicity, the event 
$\mathcal{H}_0\cap \mathcal{H}_{2n}^c$ implies  $\sum_{k=0}^{2n} \mathbbm{1}_{\mathcal{E}_k} \geq 1$. In addition, $\mathcal{H}_0$ and $\mathcal{H}_{2n}^c$ are also independent. Thus, we have 
$$
\mathbb{P} \bigg( \sum_{k=0}^{2n} \mathbbm{1}_{\mathcal{E}_k} \geq 1 \bigg) \geq \mathbb{P}(\mathcal{H}_0\cap \mathcal{H}_{2n}^c) = \mathbb{P}(\mathcal{H}_0) \mathbb{P}( \mathcal{H}_{0}^c) \geq e^{-2n({\mathcal{J}}_t(\mu_0) + \epsilon)} (1-e^{-2n({\mathcal{J}}_t(\mu_0) - \epsilon)}). 
$$ 
Now with \eqref{e0} and \eqref{sum_lb}, it holds that 
$$\mathbb{P}(\mathcal{E}_0) \geq \frac{1}{2n^2} e^{-2n({\mathcal{J}}_t(\mu_0) + \epsilon)} \geq e^{-2n({\mathcal{J}}_t(\mu_0) + 2\epsilon)}$$
which is the lower bound in Theorem \ref{main_thm}.

Finally, we turn to the proof of \eqref{finite_lb}, and we will separate the proof into two cases when $0<t<1/2$ or $t=1/2$, given in the next two subsections.
\subsection{The case $0<t<1/2$}

To obtain the lower bound, we will plant a path with a high passage value that satisfies our desired transversal fluctuation, and then show that the geodesic will be close to the planted path. A similar construction has appeared in \cite{midpoint}, and the difference is that there it was used to study the exactly solvable model at the moderate deviation scale while here we are addressing the general model at the large deviation scale.

To start, let $\epsilon>0$ be given, by Proposition \ref{propJ}, we will fix a positive number $\delta$ such that 
\begin{equation}\label{fix_d2}
t+ 100\delta < 1/2 \quad \textup{ and } \quad \;\Big|\;\mathcal{J}_{t+100\delta}(\mu_0 + \delta) - \mathcal{J}_t(\mu_0)\;\Big|\; \leq \epsilon.
\end{equation}
In the later part of the proof, we may decrease the positive value of $\delta$ further toward zero.

\begin{figure}[t]
\begin{center}

\tikzset{every picture/.style={line width=0.75pt}} 

\begin{tikzpicture}[x=0.75pt,y=0.75pt,yscale=1.1,xscale=-1.1]

\draw  [fill={rgb, 255:red, 0; green, 0; blue, 0 }  ,fill opacity=1 ] (76.37,230.65) .. controls (76.37,228.63) and (78,227) .. (80.02,227) .. controls (82.03,227) and (83.67,228.63) .. (83.67,230.65) .. controls (83.67,232.67) and (82.03,234.3) .. (80.02,234.3) .. controls (78,234.3) and (76.37,232.67) .. (76.37,230.65) -- cycle ;
\draw [color={rgb, 255:red, 155; green, 155; blue, 155 }  ,draw opacity=1 ]   (98.5,86.4) -- (230.3,220.55) ;
\draw  [fill={rgb, 255:red, 0; green, 0; blue, 0 }  ,fill opacity=1 ] (237.03,70.32) .. controls (237.03,68.3) and (238.67,66.67) .. (240.68,66.67) .. controls (242.7,66.67) and (244.33,68.3) .. (244.33,70.32) .. controls (244.33,72.33) and (242.7,73.97) .. (240.68,73.97) .. controls (238.67,73.97) and (237.03,72.33) .. (237.03,70.32) -- cycle ;
\draw  [dash pattern={on 0.84pt off 2.51pt}]  (120.67,109.3) -- (80.02,230.65) ;
\draw  [dash pattern={on 0.84pt off 2.51pt}]  (240.68,70.32) -- (120.67,109.3) ;
\draw  [fill={rgb, 255:red, 0; green, 0; blue, 0 }  ,fill opacity=1 ] (85.7,201.98) .. controls (85.7,199.97) and (87.33,198.33) .. (89.35,198.33) .. controls (91.37,198.33) and (93,199.97) .. (93,201.98) .. controls (93,204) and (91.37,205.63) .. (89.35,205.63) .. controls (87.33,205.63) and (85.7,204) .. (85.7,201.98) -- cycle ;
\draw  [fill={rgb, 255:red, 0; green, 0; blue, 0 }  ,fill opacity=1 ] (96.69,169.98) .. controls (96.69,167.96) and (98.33,166.33) .. (100.34,166.33) .. controls (102.36,166.33) and (103.99,167.96) .. (103.99,169.98) .. controls (103.99,171.99) and (102.36,173.62) .. (100.34,173.62) .. controls (98.33,173.62) and (96.69,171.99) .. (96.69,169.98) -- cycle ;
\draw  [fill={rgb, 255:red, 0; green, 0; blue, 0 }  ,fill opacity=1 ] (106.41,140.83) .. controls (106.41,138.82) and (108.04,137.18) .. (110.06,137.18) .. controls (112.07,137.18) and (113.71,138.82) .. (113.71,140.83) .. controls (113.71,142.85) and (112.07,144.48) .. (110.06,144.48) .. controls (108.04,144.48) and (106.41,142.85) .. (106.41,140.83) -- cycle ;
\draw  [fill={rgb, 255:red, 0; green, 0; blue, 0 }  ,fill opacity=1 ] (117.02,109.3) .. controls (117.02,107.28) and (118.65,105.65) .. (120.67,105.65) .. controls (122.68,105.65) and (124.32,107.28) .. (124.32,109.3) .. controls (124.32,111.32) and (122.68,112.95) .. (120.67,112.95) .. controls (118.65,112.95) and (117.02,111.32) .. (117.02,109.3) -- cycle ;
\draw  [fill={rgb, 255:red, 0; green, 0; blue, 0 }  ,fill opacity=1 ] (146.73,100.44) .. controls (146.73,98.43) and (148.37,96.79) .. (150.38,96.79) .. controls (152.4,96.79) and (154.03,98.43) .. (154.03,100.44) .. controls (154.03,102.46) and (152.4,104.09) .. (150.38,104.09) .. controls (148.37,104.09) and (146.73,102.46) .. (146.73,100.44) -- cycle ;
\draw  [fill={rgb, 255:red, 0; green, 0; blue, 0 }  ,fill opacity=1 ] (177.03,89.81) .. controls (177.03,87.79) and (178.66,86.16) .. (180.68,86.16) .. controls (182.69,86.16) and (184.32,87.79) .. (184.32,89.81) .. controls (184.32,91.82) and (182.69,93.46) .. (180.68,93.46) .. controls (178.66,93.46) and (177.03,91.82) .. (177.03,89.81) -- cycle ;
\draw  [fill={rgb, 255:red, 0; green, 0; blue, 0 }  ,fill opacity=1 ] (206.45,80.67) .. controls (206.45,78.65) and (208.09,77.02) .. (210.1,77.02) .. controls (212.12,77.02) and (213.75,78.65) .. (213.75,80.67) .. controls (213.75,82.68) and (212.12,84.32) .. (210.1,84.32) .. controls (208.09,84.32) and (206.45,82.68) .. (206.45,80.67) -- cycle ;

\draw (71.02,239.05) node [anchor=north west][inner sep=0.75pt]    {$\mathbf h_{2J_0}  =(n,n)$};
\draw (251.02,62.05) node [anchor=north west][inner sep=0.75pt]    {$\mathbf h_{0} =(0,0)$};
\draw (111.02,117.05) node [anchor=north west][inner sep=0.75pt]    {$\mathbf h_{J_0} = (n/2+ (t+100\delta)n, n/2- (t+100\delta)n)$};

\end{tikzpicture}

\captionsetup{width=.8\linewidth}
\caption{
Planting of the vertices $\{\mathbf{h}_j\}_{j=0}^{2J_0}$. The starting point $\mathbf{h}_0$, the midpoint $\mathbf{h}_{J_0}$, and the endpoint $\mathbf{h}_{2J_0}$ are labeled in the figure above.}\label{fig3}
\end{center}
\end{figure}
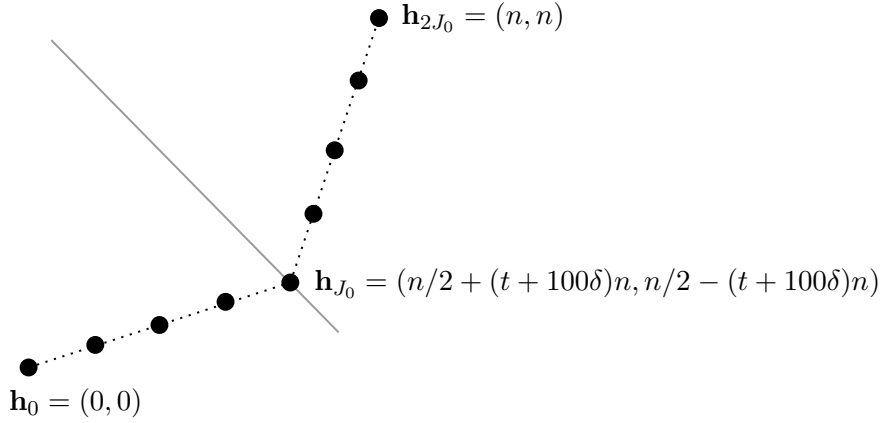
Next, as illustrated in Figure \ref{fig3}, we will look at a sequence of points whose $\ell_1$-distances are $2\delta^5n$ apart along the two straight line segments in between $\mathbf{0}$,  $(n/2+ (t+100\delta)n, n/2- (t+100\delta)n)$ and $(n,n)$. We may further decrease the value of $\delta$ so that $\delta^{-5}$ is an integer. If we denote $\delta^{-5}$ as $J_0$, because the $\ell_1$-distance between $\mathbf{0}$ and $(n/2+ (t+100\delta)n, n/2- (t+100\delta)n)$ is exactly $n$,  we can label the points as $\mathbf{h}_j$ for $j=0, \dots, 2J_0$, where $\mathbf{h}_0 = \mathbf{0}$, $\mathbf{h}_{J_0} =  (n/2+ (t+100\delta)n, n/2- (t+100\delta)n)$ and $\mathbf{h}_{2J_0} = (n,n)$.

Define the event 
$$
\mathcal{A} = \bigcap_{i=1}^{2J_0} \mathcal{A}_i = \bigcap_{i=1}^{2J_0} \Big\{G_{\mathbf h_{i-1}, \mathbf h_i} \geq (\mu_0+\delta)2\delta^5{n}\Big\}.
$$
By Proposition \ref{ld_right} we have
\[
\mathbb{P}(\mathcal{A}_i) \geq e^{-2\delta^5n(\mathcal{J}_{t+100 \delta} +\epsilon)}
\]
for each $i$. Then by independence and \eqref{fix_d2} it holds that
$$\mathbb{P}(\mathcal{A}) \geq e^{-2n(\mathcal{J}_{t+100 \delta}(\mu_0 + \delta)+\epsilon) } \geq e^{-2n(\mathcal{J}_{t}(\mu_0) + 2\epsilon)}.$$

\begin{figure}[t]
\begin{center}
\tikzset{every picture/.style={line width=0.75pt}} 
\begin{tikzpicture}[x=0.75pt,y=0.75pt,yscale=-1.3,xscale=1.3]

\draw [color={rgb, 255:red, 208; green, 2; blue, 27 }  ,draw opacity=1 ][line width=1.5]    (115.67,225.53) .. controls (160.6,220.5) and (116.93,198.83) .. (152.13,185.1) ;
\draw [color={rgb, 255:red, 208; green, 2; blue, 27 }  ,draw opacity=1 ][line width=1.5]    (155.23,185.1) .. controls (214.93,172.77) and (188.27,97.5) .. (245.23,95.43) ;
\draw [color={rgb, 255:red, 208; green, 2; blue, 27 }  ,draw opacity=1 ][line width=1.5]    (245.23,95.43) .. controls (276.6,92.37) and (261.6,77.7) .. (274.57,65.43) ;
\draw [color={rgb, 255:red, 74; green, 144; blue, 226 }  ,draw opacity=0.66 ][line width=1.5]    (169.07,198.43) .. controls (211.4,151.77) and (255.73,183.1) .. (259.07,109.1) ;
\draw [color={rgb, 255:red, 155; green, 155; blue, 155 }  ,draw opacity=0.55 ]   (102.13,135.1) -- (202.13,235.1) ;
\draw [color={rgb, 255:red, 155; green, 155; blue, 155 }  ,draw opacity=0.55 ]   (195.23,45.43) -- (295.23,145.43) ;
\draw [color={rgb, 255:red, 74; green, 144; blue, 226 }  ,draw opacity=0.66 ][line width=1.5]    (140.4,206.9) .. controls (148.07,201.57) and (154.07,226.5) .. (169.07,198.43) ;
\draw [color={rgb, 255:red, 74; green, 144; blue, 226 }  ,draw opacity=0.66 ][line width=1.5]    (259.07,109.1) .. controls (259.07,98.23) and (269.07,104.9) .. (265.07,90.9) ;
\draw [color={rgb, 255:red, 74; green, 144; blue, 226 }  ,draw opacity=0.66 ][line width=1.5]    (88.4,224.57) .. controls (101.07,205.23) and (118.73,219.57) .. (140.4,206.9) ;
\draw [color={rgb, 255:red, 74; green, 144; blue, 226 }  ,draw opacity=0.66 ][line width=1.5]    (265.07,90.9) .. controls (265.07,80.03) and (275.4,73.5) .. (309.4,73.17) ;
\draw [color={rgb, 255:red, 248; green, 231; blue, 28 }  ,draw opacity=1 ][line width=3]  [dash pattern={on 3.38pt off 3.27pt}]  (169.07,198.43) .. controls (208.13,152.9) and (255.73,183.1) .. (259.07,109.1) ;
\draw [color={rgb, 255:red, 245; green, 166; blue, 35 }  ,draw opacity=1 ][line width=3]  [dash pattern={on 3.38pt off 3.27pt}]  (139.33,207.23) .. controls (147,201.9) and (152.4,229.23) .. (167.4,201.17) ;
\draw [color={rgb, 255:red, 245; green, 166; blue, 35 }  ,draw opacity=1 ][line width=3]  [dash pattern={on 3.38pt off 3.27pt}]  (260.07,106.23) .. controls (260.07,95.37) and (268.13,106.57) .. (265.07,90.9) ;
\draw [color={rgb, 255:red, 245; green, 166; blue, 35 }  ,draw opacity=1 ][line width=3]  [dash pattern={on 3.38pt off 3.27pt}]  (115.67,225.53) .. controls (138.73,221.57) and (137.07,219.9) .. (137.67,208) ;
\draw [color={rgb, 255:red, 245; green, 166; blue, 35 }  ,draw opacity=1 ][line width=3]  [dash pattern={on 3.38pt off 3.27pt}]  (274.57,65.43) .. controls (267.4,73.57) and (266.73,85.57) .. (265.07,90.9) ;
\draw  [fill={rgb, 255:red, 0; green, 0; blue, 0 }  ,fill opacity=1 ] (112.57,225.53) .. controls (112.57,223.82) and (113.95,222.43) .. (115.67,222.43) .. controls (117.38,222.43) and (118.77,223.82) .. (118.77,225.53) .. controls (118.77,227.25) and (117.38,228.63) .. (115.67,228.63) .. controls (113.95,228.63) and (112.57,227.25) .. (112.57,225.53) -- cycle ;
\draw  [fill={rgb, 255:red, 0; green, 0; blue, 0 }  ,fill opacity=1 ] (135.83,207.23) -- (139.33,207.23) -- (139.33,210.73) -- (135.83,210.73) -- cycle ;
\draw  [fill={rgb, 255:red, 0; green, 0; blue, 0 }  ,fill opacity=1 ] (263.5,86.23) -- (267,86.23) -- (267,89.73) -- (263.5,89.73) -- cycle ;
\draw  [fill={rgb, 255:red, 0; green, 0; blue, 0 }  ,fill opacity=1 ] (149.03,185.1) .. controls (149.03,183.39) and (150.42,182) .. (152.13,182) .. controls (153.85,182) and (155.23,183.39) .. (155.23,185.1) .. controls (155.23,186.81) and (153.85,188.2) .. (152.13,188.2) .. controls (150.42,188.2) and (149.03,186.81) .. (149.03,185.1) -- cycle ;
\draw  [fill={rgb, 255:red, 0; green, 0; blue, 0 }  ,fill opacity=1 ] (242.13,95.43) .. controls (242.13,93.72) and (243.52,92.33) .. (245.23,92.33) .. controls (246.95,92.33) and (248.33,93.72) .. (248.33,95.43) .. controls (248.33,97.15) and (246.95,98.53) .. (245.23,98.53) .. controls (243.52,98.53) and (242.13,97.15) .. (242.13,95.43) -- cycle ;
\draw  [fill={rgb, 255:red, 0; green, 0; blue, 0 }  ,fill opacity=1 ] (271.47,65.43) .. controls (271.47,63.72) and (272.85,62.33) .. (274.57,62.33) .. controls (276.28,62.33) and (277.67,63.72) .. (277.67,65.43) .. controls (277.67,67.15) and (276.28,68.53) .. (274.57,68.53) .. controls (272.85,68.53) and (271.47,67.15) .. (271.47,65.43) -- cycle ;

\draw (103.33,230.8) node [anchor=north west][inner sep=0.75pt]    {$\mathbf{h}_{m^*-1}$};
\draw (148.33,168.13) node [anchor=north west][inner sep=0.75pt]    {$\mathbf{h}_{m^*}$};
\draw (234,100.47) node [anchor=north west][inner sep=0.75pt]    {$\mathbf{h}_{k^*}$};
\draw (278,49.47) node [anchor=north west][inner sep=0.75pt]    {$\mathbf{h}_{k^*+1}$};
\draw (124,194.6) node [anchor=north west][inner sep=0.75pt]    {$\mathbf a^*$};
\draw (270.67,82.93) node [anchor=north west][inner sep=0.75pt]    {$\mathbf b^*$};
\draw (227.67,165.93) node [anchor=north west][inner sep=0.75pt]    {$\gamma_{\textup{mid}}^{*}$};
\draw (242.67,65) node [anchor=north west][inner sep=0.75pt]    {$\gamma_{\textup{left}}^{*}$};
\draw (146.67,217.63) node [anchor=north west][inner sep=0.75pt]    {$\gamma_{\textup{right}}^{*}$};
\draw (312.67,68.4) node [anchor=north west][inner sep=0.75pt]    {$\Gamma_{0,n}$};
\draw (186.67,127.07) node [anchor=north west][inner sep=0.75pt]    {$\Theta$};

\end{tikzpicture}
\captionsetup{width=.8\linewidth}
\caption{
An illustration of the definitions: The concatenation of geodesics connecting the points $\mathbf{h}_i$ is depicted in red and is denoted by $\Theta$. The geodesic from $(0,0)$ to $(n,n)$ is represented in blue. From these two paths, we construct a new path, $\gamma^*$, starting at $\mathbf{h}_{m^*-1}$ and ending at $\mathbf{h}_{k^*+1}$, shown as a yellow dotted line. Subsequently, the path $\gamma^*$ is further divided into three disjoint segments.}\label{fig1}
\end{center}
\end{figure}
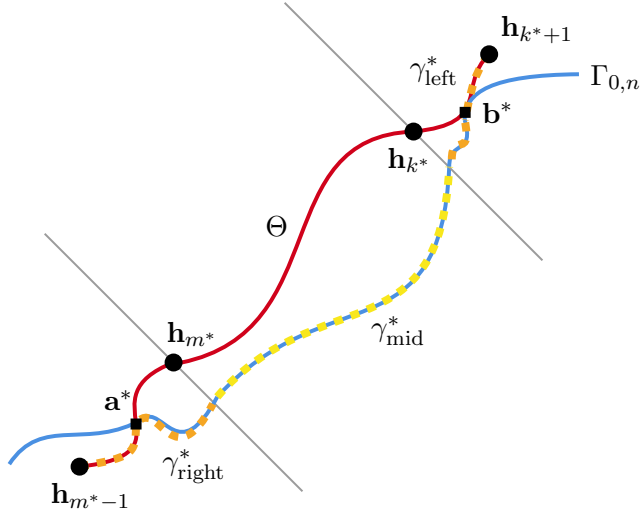

Next, we will show that $\Mid_{0,n}$ is to the right of $(n/2 + tn, n/2-tn)$ with probability at least $1/10$ when conditioned on the event $\mathcal{A}$ above. To do this, we will start with a coalescence bound between the geodesic $\Gamma_{0,n}$ and the concatenation of the geodesics between $\mathbf h_{i-1}$ and $\mathbf h_i$, which is defined below
$$\Theta = \textup{the concatenation of the geodesics $\Gamma_{\mathbf{h}_{i-1}, \mathbf{h}_i}$ for $i=1, \dots, 2J_0$}.$$

The following definitions are illustrated in Figure \ref{fig1}.
Let us denote the last intersection point between $\Gamma_{0,n}$ and $\Theta$ below $\mathcal{L}_{n/2}$ as $\mathbf{a}^*$, and let $\mathbf{b}^*$ denote the first intersection point $\Gamma_{0,n}$ and $\Theta$ above $\mathcal{L}_{n/2}$. Define the event 
$$\mathcal{D} = \Big\{\textup{at least one of  $\mathbf{a}^*$ or $\mathbf{b}^*$ is between $\mathcal{L}_{n/2-\delta n}$ and $\mathcal{L}_{n/2+\delta n}$}\Big\}.$$
and we will show that, $\mathbb{P}(\mathcal{D}|\mathcal{A}) \geq 1/10$.

Note that since $\mathbf{a}^*$ and $\mathbf{b}^*$ lie on the path $\Theta$, which passes through the point $\mathbf{h}_{J_0} = (n/2 + (t + 100\delta)n, n/2 - (t + 100\delta)n)$, when event $\mathcal{D}$ occurs, at least one of the points $\mathbf{a}^*$ or $\mathbf{b}^*$ must be contained within the ball $\mathbf{S}$ defined as follows
$$\mathbf{S} = \left\{\mathbf{x} : |\mathbf{x} - \mathbf{h}_{J_0}|_2 \leq 10 \delta n\right\}.$$
Furthermore, since the midpoint $\text{Mid}_{0,n}$ lies on the segment of $\Gamma_{0,n}$ between $\mathbf{a}^*$ and $\mathbf{b}^*$ and is also situated on the line $\mathcal{L}_{n/2}$, if either $\mathbf{a}^*$ or $\mathbf{b}^*$ is contained within $\mathbf{S}$, then $\text{Mid}_{0,n}$ must also be included in $\mathbf{S}$. Now, since the point $\bigg(n/2 + tn, n/2 - tn\bigg)$ is positioned to the left of the ball $\mathbf{S}$, we can conclude that $\text{Mid}_{0,n}$ must be to the right of the point $\bigg(n/2 + tn, n/2 - tn\bigg)$.

Let $m^*$ denote the smallest index such that $\mathbf{a}^*$ is below $\mathcal{L}_{\mathbf{h}_{m^*}}$, and similarly let $k^*$ be the largest index such that $\mathbf{b}^*$ is above $\mathcal{L}_{\mathbf{h}_{k^*}}$.
Let us define the event 
$$\mathcal{C}_{m,k} =  \{m^* = m \textup{ and }k^* = k\}.$$
To lower bound $\mathbb{P}(\mathcal{D}|\mathcal{A})$, 
it suffices for us to upper bound its complement  
\begin{equation}\label{unionC}
\sum_{m = 1}^{J_0-\delta^{-4}} \sum_{k = J_0 + \delta^{-4}}^{2J_0}\mathbb{P}( \mathcal{C}_{m, j}| \mathcal{A}).
\end{equation}

First, we will try to bound $\mathbb{P}(\mathcal{C}_{m, k}| \mathcal{A}).$
To do this, let us define a path $\gamma^*$ between $\mathbf{h}_{m-1}$ and $\mathbf{h}_{k+1}$, obtained by following the segment of the geodesic $\Gamma_{\mathbf{h}_{m-1}, \mathbf{h}_{m}}$ from $\mathbf{h}_{m-1}$ to $\mathbf{a}^*$, then following the segment of the geodesic $\Gamma_{0, n}$ from $\mathbf{a}^*$ to $\mathbf{b}^*$, and lastly following the geodesic $\Gamma_{\mathbf{h}_{k}, \mathbf{h}_{k+1}}$ from $\mathbf{b}^*$ to $\mathbf{h}_{k+1}$. In particular, note that the last-passage value of $\gamma^*$ must be bigger than the last-passage value along $\Theta$ between $\mathbf{h}_{m-1}$ and $\mathbf{h}_{k+1}$. Thus on the event $\mathcal{C}_{m,k}\cap \mathcal{A}$, it holds that 
$$G(\gamma^*) \geq (\mu_0 + \delta)(k+1-(m-1))2\delta^5n.$$
 
Now, let us separate the path $\gamma^*$ into three pieces $\gamma^*_{\textup{left}}, \gamma^*_{\textup{mid}}$ and $\gamma^*_{\textup{right}}$, which are obtained by cutting $\gamma^*$ using the two antidiagonal lines $\mathcal{L}_{\mathbf{h}_{m}}$ and $\mathcal{L}_{\mathbf{h}_k}$. Then, it holds that 
\begin{align}
\mathbb{P}(\mathcal{C}_{m, k}| \mathcal{A})
& \leq \mathbb{P}\Big(G(\gamma^*) \geq (\mu_0 + \delta)(k+1-(m-1))2\delta^5n \;\Big|\; \mathcal{A}\Big)\nonumber \\
& \leq \mathbb{P}\Big(G(\gamma^*_{\textup{left}}) \geq \mu_02\delta^5n + \frac{\delta(k+1-(m-1))}{3}2\delta^5n \;\Big|\;\mathcal{A}\Big)\label{left}\\
& \qquad + \mathbb{P}\Big(G(\gamma^*_{\textup{mid}}) \geq \mu_0(k-m)2\delta^5n + \frac{\delta(k+1-(m-1))}{3}2\delta^5n  \;\Big|\;\mathcal{A}\Big)\label{mid}\\
& \qquad \qquad + \mathbb{P}\Big(G(\gamma^*_{\textup{right}}) \geq \mu_02\delta^5n + \frac{\delta(k+1-(m-1))}{3}2\delta^5n\;\Big|\;\mathcal{A}\Big)\label{right}
\end{align}

We will start with the estimate for \eqref{mid}. Let $\mathcal{L}_\mathbf{a}^b$ denote the antidiagonal segment of $\ell_1$-length $b$ with midpoint at $\mathbf{a}$. By the definition of $\gamma^*_{\textup{mid}}$, it must be disjoint from the random path $\Theta$, and it must start within $\mathcal{L}_{\mathbf h_m}^{2n}$ and end within $\mathcal{L}_{\mathbf h_k}^{2n}$. Let us define the event 
$$\mathcal{M} = \Big\{\exists \textup{ $\gamma$ from }\mathcal{L}_{\mathbf h_m}^{2n} \textup{ to } \mathcal{L}_{\mathbf h_k}^{2 n} \textup{ such that }G(\gamma)  \geq \mu_0(k-m)2\delta^5n + \frac{\delta(k+1-(m-1))}{3}2\delta^5n \Big\}.$$
Let $A\Box B$ denote the disjoint occurrence of the events $A$ and $B$. Since $\mathcal{M}$ and $\mathcal{A}$ are both increasing,  by the BKR inequality \cite{BKR_ineq}
$$\mathbb{P}\Big(\Big\{G(\gamma^*_{\textup{mid}}) \geq \mu_0(k-m)2\delta^3n + \frac{\delta(k+1-(m-1))}{3}2\delta^5n\Big\}\cap \mathcal{A} \Big) \leq \mathbb{P}(\mathcal{M} \Box \mathcal{A}) \leq \mathbb{P}(\mathcal{M}) \mathbb{P}(\mathcal{A}).$$
Therefore, by a union bound,
\begin{align*}
\eqref{mid} &\leq \mathbb{P}(\mathcal{M}) \\
&\leq \sum_{\mathbf{p}\in \mathcal{L}_{\mathbf h_m}^{2 n}, \mathbf{q} \in \mathcal{L}_{\mathbf h_m}^{2n}} \mathbb{P}\Big(G_{\mathbf p, \mathbf q} \geq \mu_0(k-m)2\delta^5n + \frac{\delta(k+1-(m-1))}{3}2\delta^5n  \Big)\\
&\leq 10n^2 e^{-c_\delta n} \quad \textup{ by Proposition \ref{ld_right}}.
\end{align*}

Now, turn our attention to \eqref{left}, and the estimate for \eqref{right} follows from the same argument. It holds that 
\begin{align}
\eqref{left} &\leq \mathbb{P}\Big(G_{\mathbf{h}_m, \mathcal{L}_{\mathbf{h}_{m+1}}} \geq \mu_02\delta^5n + \frac{\delta(k+1-(m-1))}{3}2\delta^5n \;\Big|\;\mathcal{A}\Big)\nonumber\\
&= \mathbb{P}\Big(G_{\mathbf{h}_m, \mathcal{L}_{\mathbf{h}_{m+1}}} \geq \mu_02\delta^5n + \frac{\delta(k+1-(m-1))}{3}2\delta^5n \;\Big|\;\mathcal{A}_{m+1}\Big)\nonumber\\
& \leq \frac{\mathbb{P}\Big(G_{\mathbf{h}_m, \mathcal{L}_{\mathbf{h}_{m+1}}} \geq \mu_02\delta^5n + \frac{\delta(k+1-(m-1))}{3}2\delta^5n\Big)}{\mathbb{P}\Big(G_{\mathbf{h}_m,\mathbf{h}_{m+1}} \geq \mu_02\delta^5{n} + 2\delta^6{n}\Big)}. \label{ratio}
\end{align}
By Proposition \ref{ld_right}, the denominator in \eqref{ratio} is lower bounded as 
$$\mathbb{P}\Big(G_{\mathbf{h}_m,\mathbf{h}_{m+1}} \geq \mu_02\delta^5{n} + 2\delta^6{n}\Big) \geq e^{-2\delta^5n(\mathcal{J}_{t+100\delta}(\mu_0 + \delta) + \epsilon)}.$$ While the numerator in \eqref{ratio} is upper bounded by 
\begin{align*}\mathbb{P}\Big(G_{\mathbf{h}_m, \mathcal{L}_{\mathbf{h}_{m+1}}} \geq \mu_02\delta^5n + \frac{\delta(k+1-(m-1))}{3}2\delta^5n\Big) &\leq ne^{-2\delta^5n\mathcal{J}_{t+100\delta}(\mu_0 + \frac{\delta(k+1-(m-1))}{3})} \\
&\leq ne^{-2\delta^5n\mathcal{J}_{t+100\delta}(\mu_0 + \delta^{-1})}.
\end{align*}
Because $\mathcal{J}_{t+100\delta}(r)$ is non-negative, convex, and strictly increasing when $r \geq \mu_{t+100\delta}$, for $\delta>0$ sufficiently small, we must have 
$\eqref{ratio} \leq ne^{-c_\delta n}.$

Now, we have shown that $\mathbb{P}(\mathcal{C}_{m,k}|\mathcal{A}) \leq 100n^2 e^{-c_\delta n}.$ This implies that $\eqref{unionC} \leq \delta^{-100}n^2 e^{-c_\delta n}$, which can be made less than $1/2$ for all large $n$ depending on $\delta$. Thus, we have shown 
$$
\mathbb{P}\Big(\Mid_{0,n} \cdot \mathbf{e}_1 \geq  n/2+tn\Big)
\geq \mathbb{P}(\mathcal{D})
\geq \mathbb{P}(\mathcal{D}|\mathcal{A}) \mathbb{P}(\mathcal{A})
 \geq \tfrac12 e^{-2n(\mathcal{J}_{t}(\mu_0) + \epsilon)} \geq e^{-2n(\mathcal{J}_{t}(\mu_0) + 2\epsilon)}
$$
provided $n$ is sufficiently large. With this, we conclude this subsection.

\subsection{The case $t=1/2$}

For $t < 1/2$ the last section bounds the probability of the midpoint being to the right of a point on the anti-diagonal line $\mathcal{L}_n$. For $t = 1/2$, there are no points to the right of $(n,0)$ that lie on this anti-diagonal line and so the argument does not immediately apply. In this section, we give a modified argument for the $t = 1/2$ case.

For simplicity of the notation, let $I = \mathcal{J}_{1/2}$.
Let $0<\alpha< \beta < 1/2$ be two constants which we will fix later. Define $a =  n^\alpha$ and $b=n^\beta$. By the large deviation lower bound in Proposition \ref{ld_right} and the continuity result in Proposition \ref{propJ}, for each fixed $\epsilon > 0$, we choose $\delta > 0$ such that 
$$\mathbb{P}(G_{\mathbf{0}, (a,0)} > a(\mu_0 + \delta)) \geq  e^{-a(I(\mu_0 +\delta) + \epsilon/2)} \geq e^{-a(I(\mu_0) + \epsilon)}.$$

Next, we will define three events $\mathcal{A}$, $\mathcal{B}$, $\mathcal{C}$ and show that when conditioned on their intersection, the probability that geodesic being the corner path is at least 1/10. 
To start, let $\mathcal{A}$ denote the event that 
$$
\mathcal{A} = \bigcap_{i, j = 0}^{n/a-1}\Big\{ G_{(ia,0), (ia+a, 0)} \geq a(\mu_0 +\delta) \text{ and } G_{(n, ja), (n, ja+a)} \geq a(\mu_0 +\delta)\Big\}.
$$
Next, fix $0<c_1<c_2$ be such that $\mathbb{P}(\omega_\mathbf{0}<c_1) > 0$ and $\mathbb{P}(\omega_\mathbf{0}>c_2) > 0$. Define the event $\mathcal{B}$ as 
$$\mathcal{B} = \Big\{\omega_{(i,j)} < c_1 \textup{ for } n-b \leq i < n \textup{ and } 0<j\leq b \Big\}.$$
In addition, let $\mathcal{C}$ denote the event that 
$$\mathcal{C} = \Big\{\omega_{(i,0)} > c_2 \textup{ for }n-b\leq i \leq n\Big\} \bigcap \Big\{\omega_{(n,j)} > c_2 \textup{ for } 0\leq j \leq b \Big\}. $$

Next, we will lower bound the probability $\mathbb{P}(\mathcal{A} \cap \mathcal{B}\cap \mathcal{C})$. Since $b = n^{\beta}$ where $\beta<  1/2$, $\mathbb{P}(\mathcal{B})$ and $\mathbb{P}(\mathcal{C})$ are both lower bounded by $e^{-\epsilon n}$ when $n$ is large. 
Note that $\mathcal{B}$ is independent of $\mathcal{A}$ and $\mathcal{C}$, while $\mathcal{A}$ and $\mathcal{C}$ are both increasing. Hence, by Proposition \ref{ld_right} and the FKG inequality, it holds that 
$$\mathbb{P}(\mathcal{A} \cap \mathcal{B} \cap \mathcal{C}) \geq \mathbb{P}(\mathcal{A}) \mathbb{P} (\mathcal{B})\mathbb{P}(\mathcal{C}) \geq e^{-2n(I(\mu_0) + 3\epsilon)}.$$

For the rest of this section, we will show the following lemma, which will finish the proof of our desired lower-bound 
$$\mathbb{P}(\Mid_{0,n} = (n,0)) \geq \frac{1}{100}e^{-2n(I(\mu_0) + 3\epsilon)}.$$
\begin{lemma} It holds that 
$$\mathbb{P}(\Mid_{0,n} = (n,0) | \mathcal{A}\cap \mathcal{B}\cap\mathcal{C}) \geq 1/2.$$
\end{lemma}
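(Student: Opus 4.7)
The strategy is to show that on $\mathcal{A}\cap\mathcal{B}\cap\mathcal{C}$ the corner path $P^{*}:(0,0)\to(n,0)\to(n,n)$ beats every other up-right path $\gamma:(0,0)\to(n,n)$ with conditional probability at least $1/2$. Since $\mathcal{A}$ forces the corner-path weight $W^{*}$ to satisfy $W^{*}\geq 2n(\mu_0+\delta)$, it suffices to bound by $1/2$ the conditional probability that some non-corner $\gamma$ has $w(\gamma)\geq 2n(\mu_0+\delta)$. I parametrize such $\gamma$ by its last bottom point $(i_0,0)$ with $i_0<n$ and its first right-column point $(n,j_0)$ with $j_0\geq 1$, and analyze two regimes.

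\emph{Short-range regime} ($i_0\geq n-b$ and $j_0\leq b$): the interior portion of $\gamma$ lies entirely inside the rectangle $[n-b,n-1]\times[1,b]$, so each of its weights is $<c_1$ by $\mathcal{B}$; meanwhile the $n-i_0$ bottom weights $\omega_{(i,0)}$ for $i_0<i\leq n$ and the $j_0-1$ right weights $\omega_{(n,j)}$ for $1\leq j<j_0$ that $P^{*}$ uses but $\gamma$ does not are each $>c_2$ by $\mathcal{C}$. Canceling the shared contribution $\omega_{(n,j_0)}$ and comparing term by term yields, deterministically on $\mathcal{A}\cap\mathcal{B}\cap\mathcal{C}$,
\[
W^{*}-w(\gamma)>(c_2-c_1)\bigl((n-i_0)+(j_0-1)\bigr)\geq c_2-c_1>0,
\]
so no short-range $\gamma$ can tie $P^{*}$.

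\emph{Long-range regime} ($i_0<n-b$ or $j_0>b$): I cut $\gamma$ at its crossing $(p,n-p)$ of $\mathcal{L}_{n/2}$, which yields $w(\gamma)\leq G_1^{(p)}+G_2^{(p)}$ with $G_1^{(p)}:=G_{(0,0),(p,n-p)}$ and $G_2^{(p)}:=G_{(p,n-p),(n,n)}$ independent. A union bound over $p\in\{0,\ldots,n-1\}$ reduces the task to $\sum_p\mathbb{P}\bigl(G_1^{(p)}+G_2^{(p)}\geq 2n(\mu_0+\delta)\mid\mathcal{A}\cap\mathcal{B}\cap\mathcal{C}\bigr)\leq 1/2$. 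Independence of $G_1^{(p)},G_2^{(p)}$, convexity of $\mathcal{J}_s$ in its second argument, and Proposition~\ref{ld_right} give the unconditional estimate $\mathbb{P}(G_1^{(p)}+G_2^{(p)}\geq 2n(\mu_0+\delta))\leq n\cdot e^{-2n\mathcal{J}_{p/n-1/2}(\mu_0+\delta)}$. For $|p/n-1/2|$ sufficiently close to $1/2$, Proposition~\ref{mono_t} and the choice of $\delta$ push this rate above $I(\mu_0)+3\epsilon$, so the naive ratio bound using $\mathbb{P}(\mathcal{A}\cap\mathcal{B}\cap\mathcal{C})\geq e^{-2n(I(\mu_0)+3\epsilon)}$ is already sufficient there.

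The main obstacle is the diagonal regime $p\approx n/2$, where $\mathcal{J}_s(\mu_0+\delta)$ can be arbitrarily small and the naive ratio explodes. Here I plan to apply a BKR disjoint-occurrence argument in the spirit of the $t<1/2$ subsection: for such $p$, the geodesics realizing $G_1^{(p)}$ and $G_2^{(p)}$ can, at the cost of a polynomial factor coming from forcing the first step to be upward and the last step to the right, be taken entirely in the bulk with witnesses disjoint from the bottom edge, the right edge, and the corner rectangle whose weights carry the conditioning. The BKR inequality then factors the conditioning out and bounds the conditional probability by the unconditional rate $e^{-2n\mathcal{J}_{s}(\mu_0+\delta)}$, which since $\mathcal{J}_{s}(\mu_0+\delta)>0$ for every $s$ is exponentially small and summable in $p$. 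Combining this with the short-range deterministic bound gives the desired $1/2$ lower bound for all sufficiently large $n$.
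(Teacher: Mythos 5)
Your short-range case is essentially the paper's and is fine, but the long-range argument has a genuine gap. After cutting at the anti-diagonal crossing $(p,n-p)$ you need to bound $\sum_p \mathbb{P}(G_1^{(p)}+G_2^{(p)}\geq 2n(\mu_0+\delta)\mid\mathcal{A}\cap\mathcal{B}\cap\mathcal{C})$, and you claim the naive ratio bound already works for $|p/n-1/2|$ near $1/2$. It does not: by Proposition~\ref{mono_t} the best unconditional rate you can get is $\mathcal{J}_{1/2}(\mu_0+\delta)=I(\mu_0+\delta)$, but $\delta$ was chosen precisely so that $I(\mu_0+\delta)\leq I(\mu_0)+\epsilon/2$ (this is what makes $\mathbb{P}(\mathcal{A})\geq e^{-2n(I(\mu_0)+2\epsilon)}$), so $\mathcal{J}_s(\mu_0+\delta)\leq I(\mu_0)+\epsilon/2<I(\mu_0)+3\epsilon$ for \emph{every} direction $s$, and the ratio $n\,e^{-2n\mathcal{J}_s(\mu_0+\delta)}/e^{-2n(I(\mu_0)+3\epsilon)}$ diverges for every $p$. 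Worse, for $p$ near $0$ or $n$ the variables $G_1^{(p)},G_2^{(p)}$ are built out of the very boundary weights you conditioned to be large (an optimizer for $G_1^{(n-1)}$ can run the whole bottom edge and pick up $\approx n(\mu_0+\delta)$ on $\mathcal{A}$), so the conditional probability in the union bound is actually near $1$ there, and neither the ratio bound nor your bulk-witness BKR device (which only makes sense for $p$ in the middle) addresses it. A minor additional issue: $\mathcal{A}\cap\mathcal{B}\cap\mathcal{C}$ is not increasing, since $\mathcal{B}$ is decreasing, so BKR as stated does not apply; what you would want is disjointness of weight regions plus FKG for $\mathcal{B}$.

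The paper sidesteps exactly this by decomposing not at the anti-diagonal but at the exit point $(k,0)$ from the bottom row and the first entry $(n,\ell)$ into the right column. This isolates the interior passage value $G_{(k,1),(n-1,\ell)}$, whose weight region is disjoint from the $\mathcal{A}$ and $\mathcal{C}$ weights and is only negatively affected by the decreasing event $\mathcal{B}$. On the bad event and $\mathcal{A}$, this interior value must exceed $\approx((n-k)+\ell)(\mu_0+\delta)$, which is a deviation of order $(n-k)+\ell\geq b=n^\beta$ above its shape-function ceiling $\mu^*\leq((n-k-1)+(\ell-1))\mu_0$, giving a probability $\leq e^{-c'((n-k)+\ell)}$ that is summable over $(k,\ell)$. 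The crucial point is that the excess needed from the interior is proportional to the interior's own length, not to $n$; your anti-diagonal decomposition destroys this proportionality because $G_1^{(p)},G_2^{(p)}$ always have total $\ell_1$-length $2n$ and overlap the conditioned boundary.
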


\begin{proof}
For integers $k$ and $\ell$ such that $0\leq k, \ell \leq n$, define $\mathcal{D}_{k, \ell}$ as the following event 
$$\mathcal{D}_{k, \ell} = \Big\{\Gamma_{0,n} \textup{ exits the $\mathbf{e}_1$-axis at $(k,0)$ and first intersect the line $x=n$ at $(n,\ell)$}\Big\}.$$
To prove the lower bound in the statement of our lemma, we shall upper bound its complement 
\begin{equation}\label{sum_D}
\sum_{k, \ell=0}^n  \mathbb{P}(\mathcal{D}_{k, \ell}|\mathcal{A}) < 1/2.
\end{equation}

First, note that if $k\geq n-b$  and $\ell \leq b$, by the definition of the events $\mathcal{B}$ and $\mathcal{C}$, the passage value of any paths between $(k,0)$ and $(n,\ell)$ inside the bulk will be strictly less than the path $(k,0) \shortrightarrow (n,0) \shortrightarrow (n,l)$, thus
$\mathbb{P}(\mathcal{D}_{k, \ell}|\mathcal{A}\cap \mathcal{B} \cap \mathcal{C}) = 0.$

Now, suppose that $k<n-b$ or $\ell > b$, then on the event $\mathcal{D}_{k, \ell}$ and $\mathcal{A}\cap \mathcal{B} \cap\mathcal{C}$, it must hold that
$$G_{(k,1), (n-1, \ell)} \geq ((n-k-1) + (\ell-1) - 2a)(\mu_0 + \delta).$$
Also note that the above event is independent of $\mathcal{A}$, $\mathcal{B}$ and $\mathcal{C}$, we have the following bounds
\begin{align}
\mathbb{P}(\mathcal{D}_{k, \ell}|\mathcal{A}\cap \mathcal{B} \cap \mathcal{C}) \nonumber
&\leq \mathbb{P}(G_{(k,1), (n-1, \ell)} \geq ((n-k-1) + (\ell-1) - 2a)(\mu_0 + \delta)|\mathcal{A}\cap \mathcal{B} \cap \mathcal{C}) \nonumber\\
&=\mathbb{P}(G_{(k,1), (n-1, \ell)} \geq ((n-k-1) + (\ell-1) - 2a)(\mu_0 + \delta)).\label{kl_bd}
\end{align}
To continue, fix $n-k$ and $\ell$, let us define the time constant 
$$\mu_* = \lim_{m\to \infty} \frac{1}{m} G_{m(k,1), m(n-1, \ell)}.$$
Following from superadditivity, it holds that 
$\mu_* \leq ((n-k-1) + (\ell-1)) \mu_0.$
Since $\max\{(n-k), \ell\} \geq b$ and $b$ is much larger than $a$, for some constant $c>0$, it holds that 
$$\Big((n-k-1) + (\ell-1) - 2a\Big)\Big(\mu_0 + \delta\Big) \geq \mu^* + c(n-k+\ell).$$
Then, by Proposition \ref{ld_right}, it holds that 
\begin{align*}
\eqref{kl_bd}
&\leq \mathbb{P}(G_{(k,1), (n-1, \ell)} \geq \mu^* + c(n-k+\ell)) \\
& \leq e^{-c'(n-k+\ell)}\leq e^{-c'n^{\alpha/10}}.
\end{align*}
With this, we have shown \eqref{sum_D}
$$\sum_{k, \ell=0}^n  \mathbb{P}(\mathcal{D}_{k, \ell}|\mathcal{A}) \leq n^2 e^{-c'n^{\alpha/10}} < 1/2 $$
and hence finished the proof of this lemma. 
\end{proof}

\bibliographystyle{amsplain}
\bibliography{refs}

\end{document}